\begin{document}

\title{On the integral domains characterized by a Bezout Property on intersections of principal ideals}

\author{Lorenzo Guerrieri \thanks{ Jagiellonian University, Instytut Matematyki, 30-348 Krak\'{o}w \textbf{Email address:} guelor@guelan.com }  \and K. Alan Loper
\thanks{ Department of Mathematics, Ohio State University – Newark, Newark, OH 43055  \textbf{Email address:}  lopera@math.ohio-state.edu }}

\maketitle

\begin{abstract}
\noindent In this article we study two classes of integral domains.  The first is characterized by having a finite intersection of principal ideals being finitely generated only when it is principal. The second class consists of the integral domains in which a finite intersection of principal ideals is always non-finitely generated except in the case of containment of one of the principal ideals in all the others. We relate these classes to many well-studied classes of integral domains, to star operations and to classical and new ring constructions.

\medskip

\noindent MSC: 13A15, 13F15, 13A18, 13F05, 13G05.\\
\noindent Keywords: intersections of principal ideals, star operations, ring constructions.
\end{abstract}

\newtheorem{theorem}{Theorem}[section]
\newtheorem{lemma}[theorem]{Lemma}
\newtheorem{prop}[theorem]{Proposition}
\newtheorem{corollary}[theorem]{Corollary}
\newtheorem{problem}[theorem]{Problem}
\newtheorem{construction}[theorem]{Construction}

\theoremstyle{definition}
\newtheorem{defi}[theorem]{Definitions}
\newtheorem{definition}[theorem]{Definition}
\newtheorem{remark}[theorem]{Remark}
\newtheorem{example}[theorem]{Example}
\newtheorem{question}[theorem]{Question}
\newtheorem{comments}[theorem]{Comments}

\newtheorem{discussion}[theorem]{Discussion}

\newcommand{\N}{\mathbb{N}}
\newcommand{\m}{\mathfrak{m}}
\newcommand{\p}{\mathfrak{p}}
\newcommand{\pp}{\mathcal{P}}
\newcommand{\Z}{\mathbb{Z}}
\newcommand{\Q}{\mathbb{Q}}
\newcommand{\F}{\mathcal{F}}
\newcommand{\h}{\mathcal{H}}
\newcommand{\G}{\mathcal{G}}
\newcommand{\I}{\mathcal{I}}
\def\min{\mbox{\rm min}}
\def\max{\mbox{\rm max}}
\def\ff{\frak}
\def\Spec{\mbox{\rm Spec }}
\def\Proj{\mbox{\rm Proj }}
\def\hgt{\mbox{\rm ht }}
\def\type{\mbox{ type}}
\def\Hom{\mbox{ Hom}}
\def\rank{\mbox{ rank}}
\def\Ext{\mbox{ Ext}}
\def\Ker{\mbox{ Ker}}
\def\Max{\mbox{\rm Max}}
\def\End{\mbox{\rm End}}
\def\xpd{\mbox{\rm xpd}}
\def\Ass{\mbox{\rm Ass}}
\def\emdim{\mbox{\rm emdim}}
\def\epd{\mbox{\rm epd}}
\def\repd{\mbox{\rm rpd}}
\def\ord{\mbox{\rm ord}}
\def\gcd{\mbox{\rm gcd}}
\def\Tr{\mbox{\rm Tr}}
\def\Res{\mbox{\rm Res}}
\def\Ap{\mbox{\rm Ap}}
\def\sdefect{\mbox{\rm sdefect}}
\maketitle

\section{Introduction}

The focus of this paper is on intersections of finite collections of principal ideals of integral domains.  This problem has been well studied in term of finiteness conditions on the intersections.  An integral domain is classically called a \it GCD domain \rm if the intersection of 
two principal ideals is always principal (this is known to be equivalent to the existence of the gcd for each pair of elements).  Of course, this condition is trivially satisfied in a PID, since then every ideal is principal, and in more  generality  for a Noetherian domain the GCD condition is equivalent to  existence of the unique factorization into irreducibles  for each element.  A domain is a \it finite conductor \rm domain.
if the intersection of two principal ideals is always finitely generated.  More generally, a domain is a \it coherent domain \rm if the intersection of two finitely generated ideals 
is still finitely generated.  Numerous variations on these ideas have been proposed and well-studied, see for some general reference \cite{Gil}, \cite{glaz2}, \cite{glaz3}, \cite{z2}.  Generally, the theme of these studies has been to put fine
variations on what types of ideals are intersected and on what well-behaved result should be expected.  The goal of this paper is somewhat different.  

In \cite{Gue}, infinite chains of local monoidal transforms of regular local rings are studied in order to determine when the directed union of such a chain has GCD-type properties.  A remarkable 
occurrence observed in this study is that sometimes, rather than a GCD domain, the process yields a domain which is, in a sense, as far as possible from being
a GCD domain.   In particular, domains arise in which the intersection of a finite collection of principal ideals is almost never finitely generated.  Our goal is to 
study such domains.  Two slightly different classes - Bezout Intersection Domains (BID) and Strong Bezout Intersection Domains (SBID) are investigated.  (These 
are defined in Section 2.)    

There are several reasons to study such domains.  They are, perhaps surprisingly, easy to construct, and have numerous elegant properties.  Also, there is a
potential connection to an old unsolved question of Vasconcelos.  As noted above, a domain is called coherent provided the intersection of two finitely generated
ideals is still finitely generated.  Noetherian domains and Pr\"ufer domains are both classes of domains that are always coherent.  Vasconcelos asked whether
the integral closure of a one-dimensional coherent domain is always a Pr\"ufer domain (see \cite[Problem 65]{Chapman} and for more detailed references see \cite[Chapter 5-7]{glaz3}).  It is easy to show that a one-dimensional, integrally closed, local domain
which is not Pr\"ufer is a SBID - and hence very far from being coherent.  Hence, a deep understanding of strong Bezout intersection domains might lead 
to progress on Vasconcelos' question by leading to a proof that such a domain cannot be the integral closure of a coherent domain.

The study of BID and SBID in higher Krull dimension could be helpful to deal with a 
more general form of Vasconcelos' question asking if the integral closure of a finite conductor domain has to be a PvMD (\cite[Remark 21]{z2}).

In this article, in Section 2 we define our classes of domains and prove some elementary properties.  Of particular note is Theorem \ref{k+m} which shows that the classical
$k + \m$ construction yields a wide class of examples of SBID and the subsequent Theorem \ref{pullback} in which the case of more general pullbacks of local domains is considered.  We also give a surprising construction (Theorem \ref{krulldomain}) of a Krull domain which is also 
BID.  

In Section 3 we define an operation on the class of ideals that we label as the $\xi$ operation.  This is a variant of the classical $w$ operation, which is,
in turn, a variant of the classical $t$ operation.  We demonstrate a lot of interplay between these three operations and our classes of domains.  
Especially noteworthy is Theorem  \ref{carattsbid} which gives several equivalent conditions for a domain to be SBID in terms of the interplay between
$\xi$ and $w$ and $t$. We also relate here BID and SBID conditions with locally cyclic ideals and Pre-Schreier domains.

In Section 4 we consider a method for constructing examples of SBID which also generalize classical
$k + \m$ constructions.  This method is reminiscent of the Rees ring construction in the fact that 
indeterminates are added to the base ring, but unlike in a standard polynomial ring, these indeterminates have coefficients that come from an ideal of the base ring. Such examples may often be not integrally closed, but
especially remarkable here is the construction (Theorem \ref{cic}) of SBID which are instead completely integrally closed.

\section{Bezout Intersection Domains}

Let $D$ be an integral domain. We say that some elements $a_1, a_2, \ldots, a_n \in D$ are \it pairwise incomparable \rm if $a_i \not \in (a_j)$ for every $i,j$. Through this article, when taking a collection of pairwise incomparable elements, we will generally assume $n \geq 2$.
The following definitions appear in the article \cite{Gue}. 

\begin{definition} 
\label{d1} An integral domain $D$ is called:
\begin{itemize}
\item  \it Bezout Intersection domain (BID) \rm if for any finite collection of pairwise incomparable elements $a_1, a_2, \ldots, a_n \in D$, the ideal $$I= (a_1) \cap (a_2) \cap \ldots \cap (a_n)$$ is either principal or  is not finitely generated.
\item  \it Strong Bezout Intersection domain (SBID) \rm if for any finite collection of pairwise incomparable elements $a_1, a_2, \ldots, a_n \in D$ with $n \geq 2$, the ideal $$I= (a_1) \cap (a_2) \cap \ldots \cap (a_n)$$ is not finitely generated.
\end{itemize}
 \end{definition}


\begin{remark} 
\label{r1}
 Let $D$ be an integral domain and consider the ideal $J=(a_1, a_2, \ldots, a_n)$. It is a standard exercise to see that $$J^{-1}:=(D : J)= \dfrac{1}{a} \left( \left( \frac{a}{a_1}\right) \cap \left( \frac{a}{a_2}\right) \cap \ldots \cap \left( \frac{a}{a_n}\right) \right) $$ where $a= \prod_{i=1}^n a_i.$ Hence Definition \ref{d1} can be equivalently restated saying: $D$ is BID if the inverse of every finitely generated ideal is either principal or not finitely generated. 
\end{remark}

Trivial examples of BID are GCD domains (in particular finite conductor or Noetherian BID are GCD domains).  In \cite[Proposition 2.5]{Gue} it is proved that a SBID needs to be local. Trivial examples of SBID are valuation domains, since in this case all the elements are comparable. As an easy consequence of a theorem by McAdam \cite{mcadam}, one can observe that integrally closed domains with linearly ordered prime ideals are SBID.

Using Remark \ref{r1}, it is easy to prove the following property of a Bezout Intersection domain.

\begin{lemma} 
\label{inve}
The invertible ideals of a Bezout Intersection domain $D$ are principal.
\end{lemma}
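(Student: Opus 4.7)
The plan is to combine invertibility of $I$ (which forces both $I$ and $I^{-1}$ to be finitely generated) with the explicit description of $I^{-1}$ as a rescaled intersection of principal ideals given in Remark \ref{r1}, and then apply the defining property of BID to that intersection.

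First I would take an invertible ideal $I$ of $D$. Since invertible ideals are always finitely generated, write $I = (a_1, \ldots, a_n)$. By throwing away any generator $a_j$ that already lies in some $(a_i)$ with $i \neq j$, I can assume that $a_1, \ldots, a_n$ are pairwise incomparable without changing $I$. If the reduction leaves only $n=1$ generator, then $I$ is already principal and we are done, so assume $n \geq 2$.

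Next I would invoke Remark \ref{r1}, which gives
\[
I^{-1} = \frac{1}{a}\bigl((a/a_1) \cap (a/a_2) \cap \cdots \cap (a/a_n)\bigr), \qquad a = \prod_{i=1}^n a_i.
\]
Because $I$ is invertible, $I^{-1}$ is also invertible and therefore finitely generated; hence the intersection $J := (a/a_1) \cap \cdots \cap (a/a_n)$ is finitely generated. A direct check shows the elements $a/a_1, \ldots, a/a_n$ are pairwise incomparable: indeed $(a/a_i) \subseteq (a/a_j)$ forces $a_j = d\, a_i$ for some $d \in D$, contradicting the incomparability of the $a_i$.

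Now I would apply the BID hypothesis to the intersection $J$: since the $a/a_i$ are pairwise incomparable with $n \geq 2$, $J$ is either principal or not finitely generated. Being finitely generated, it must be principal, and hence $I^{-1}$ is principal. Writing $I^{-1} = cD$, invertibility $I \cdot I^{-1} = D$ gives $I = (c^{-1})$, which is principal. I do not foresee a serious obstacle here: the only subtlety is the reduction to incomparable generators and the verification that the induced elements $a/a_i$ remain incomparable, both of which are elementary.
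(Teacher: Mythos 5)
Your proof is correct and follows essentially the same route as the paper: both rescale $I^{-1}$ by $a=\prod a_i$ to realize it as a finite intersection of principal ideals (the paper's $aJ^{-1}=\bigcap (a/a_i)D$, your Remark \ref{r1}) and then apply the BID dichotomy to conclude that this finitely generated intersection must be principal. Your extra checks (reducing to pairwise incomparable generators and verifying that the $a/a_i$ remain incomparable) are details the paper leaves implicit, but the argument is the same.
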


\begin{proof}
 Let $J=(a_1, \ldots, a_n)$ be an invertible ideal of $D$ and set $a=a_1 \cdots a_n$. The inverse of $J$ is $J^{-1}= \bigcap_{i=1}^n a_i^{-1}D$ and thus $a J^{-1}$ is a proper invertible ideal that is also the intersection of $n$ principal ideals of $D$. Since $D$ is a BID, then $a J^{-1}$ must be principal and therefore $J^{-1}$ and $J$ are also principal. 
\end{proof}

The following corollary derives from the last fact and generalizes the well-known fact that a Pr\"{u}fer GCD domain is a Bezout domain.

\begin{corollary} 
\label{c1}
Let $D$ be a Pr\"{u}fer BID. Then $D$ is a Bezout domain. 
\end{corollary}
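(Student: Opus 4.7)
The plan is to chain together two well-known characterizations with the lemma just proved. First, recall that a Prüfer domain is precisely a domain in which every nonzero finitely generated ideal is invertible; this is the standard definition (or one of many equivalent definitions) of Prüfer. Second, a Bezout domain is precisely a domain in which every finitely generated ideal is principal. The bridge between these two notions, under the BID hypothesis, is furnished directly by Lemma \ref{inve}, which asserts that in any BID every invertible ideal is principal.

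So the proof would proceed in one short step. I would take an arbitrary finitely generated ideal $J = (a_1, \ldots, a_n)$ of $D$. If $J = 0$ it is trivially principal, so I assume $J \neq 0$. By the Prüfer hypothesis on $D$, $J$ is invertible. By the BID hypothesis and Lemma \ref{inve}, every invertible ideal of $D$ is principal, hence $J$ is principal. Since $J$ was an arbitrary finitely generated ideal, $D$ is a Bezout domain.

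There is essentially no obstacle here; the corollary is an immediate composition of the two facts. The only thing worth noting is that the argument does not need to invoke the SBID hypothesis or any stronger intersection property — the full strength of Lemma \ref{inve} (which itself used only the BID condition together with the rewriting of $J^{-1}$ as an intersection of principal fractional ideals from Remark \ref{r1}) is exactly what is required. The classical fact that a Prüfer GCD domain is Bezout is recovered as a special case, since a GCD domain is automatically a BID.
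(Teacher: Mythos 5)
Your proof is correct and is exactly the argument the paper intends: the corollary is stated as an immediate consequence of Lemma \ref{inve}, since in a Pr\"{u}fer domain every nonzero finitely generated ideal is invertible and the lemma forces such ideals to be principal. No further comment is needed.
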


An integral domain $D$ is a \it Pre-Schreier \rm domain if whenever an element $x$ divides $ab$, then $x=rs$ with $r$ dividing $a$ and $s$ dividing $b$. Pre-Schreier domains represent a natural and well-studied generalization of GCD domains.

 An ideal $I$ of an integral domain is said to be \it locally cyclic \rm if for every $x_1, x_2, \ldots, x_n \in I$, there exists $y \in I$ such that $ (x_1, x_2, \ldots, x_n) \subseteq (y) $. Equivalently, a locally cyclic ideal is the union of an ascending chain of principal ideals. We recall that this notion derives from that of locally cyclic module used in module theory and the word "locally" is not related with the localizations of the ring. In \cite[Theorem 1.1]{zafrullah}, Zafrullah shows that an integral domain is Pre-Schreier if and only if every non-zero ideal of the form $ (a_1) \cap (a_2) \cap \ldots \cap (a_n) $ is locally cyclic. A locally cyclic ideal is clearly finitely generated only if it is principal and therefore Pre-Schreier domains are BID. 

Examples of Pre-Schreier domains are directed unions of GCD domains. In \cite[Theorem 2.11]{Gue} the SBID are characterized  among the directed unions of Noetherian GCD domains. They consist exactly of the local domains having locally cyclic maximal ideal. This result shows that the rings known as \it quadratic Shannon extensions, \rm described in articles such as \cite{HLOST}, \cite{HOT}, \cite{GHOT}, \cite{FZ}, are non-trivial examples of SBID. For a concrete example that is not a valuation domain one may consider the ring $$ S= \bigcup_{n=0}^{\infty} k \left[ x,\dfrac{y}{x^n}, \dfrac{z}{x^n} \right]_{(x,\frac{y}{x^n}, \frac{z}{x^n})},
 $$ where $x,y,z$ are indeterminates over a field $k$.

\medskip

Easy examples of SBID domains that are not necessarily Pre-Schreier arise with the classical $k+\m$ construction. 

\begin{theorem} 
\label{k+m}
Let $k$ be a field and let $X$ be an indeterminate over $k$. Let $T$ be a local domain with maximal ideal $\m$ and containing its residue field $ k(X)=T/\m.$ 
The ring $D=k+\m$ is a SBID.
\end{theorem}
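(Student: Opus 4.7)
The plan is to fix $n \ge 2$ pairwise incomparable elements $a_1,\dots,a_n \in D$ and show $I := \bigcap_{i=1}^n (a_i)$ is not finitely generated as a $D$-module.

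First I will note that $D=k+\mathfrak m$ is local with maximal ideal $\mathfrak m$: for $d=c+m$ with $c \in k$ and $m\in\mathfrak m$, the element $d$ is a unit of $D$ iff $c \neq 0$, since then $(c+m)^{-1} \in T$ has residue $1/c \in k$ and so lies in $k + \mathfrak m = D$. Hence $D^{\times} = k^{\times} + \mathfrak m$ and $\mathfrak m$ is the unique maximal ideal of $D$. If some $a_i$ were a unit then $(a_i)=D$ would contain every $a_j$, violating incomparability; so every $a_i$ lies in $\mathfrak m$.

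The core structural step is the identification $I = \bigcap_{i=1}^n a_i \mathfrak m$. The inclusion $\supseteq$ is clear since $\mathfrak m \subseteq D$. For $\subseteq$, take $y \in I$ and fix any index $i$; writing $y = a_i d$ with $d \in D$, if $d$ were a unit of $D$ then $(y) = (a_i)$ as $D$-ideals, and combining this with $y \in (a_j)$ for some $j \neq i$ (available since $n \ge 2$) would give $a_i \in (a_j)$, contradicting incomparability. Hence $d \in \mathfrak m$ and $y \in a_i \mathfrak m$; letting $i$ vary, $y \in \bigcap_i a_i \mathfrak m$. Now each $a_i \mathfrak m$ is actually an ideal of the ambient ring $T$ (a product of two $T$-ideals), so the intersection $I$ is itself a $T$-ideal --- this is the crucial gain.

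The conclusion follows from two applications of Nakayama's Lemma to the local ring $(D, \mathfrak m)$. Suppose for contradiction that $I$ is finitely generated over $D$. Being a $T$-ideal, $I/\mathfrak m I$ is a vector space over $T/\mathfrak m = k(X)$, and its $k$-dimension equals $[k(X):k]\cdot \dim_{k(X)}(I/\mathfrak m I)$. Since $X$ is transcendental over $k$ we have $[k(X):k] = \infty$, so the finite-dimensionality of $I/\mathfrak m I$ over $k$ (given by Nakayama from the finite-generation hypothesis) forces $\dim_{k(X)}(I/\mathfrak m I) = 0$, i.e., $\mathfrak m I = I$; a second application of Nakayama then yields $I = 0$, contradicting $a_1\cdots a_n \in I\setminus\{0\}$. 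The main obstacle is the identification $I = \bigcap a_i \mathfrak m$, which is where the incomparability hypothesis and the local structure of $D$ are really used; once $I$ is known to be a $T$-ideal, the infinite residue extension $k(X)/k$ built into the construction powers the rest of the argument.
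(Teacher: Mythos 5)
Your proof is correct, and it takes a genuinely different and more streamlined route than the paper's. The paper compares $I$ with $J=a_1T\cap\cdots\cap a_nT$ and splits into two cases according to whether some quotient $f/a_i$ lands in $T\setminus\m$; in the first case it identifies $I=J$ and in the second it shows $a_i^{-1}I=\m$, and in both cases it invokes a separately proved fact that proper $T$-ideals are not finitely generated over $D$ (argued there by exhibiting an infinite generating set $\{x_lf_h(X)\}$). Your observation that pairwise incomparability forces every factorization $y=a_id$ with $y\in I$ to have $d\in\m$, hence $I=\bigcap_i a_i\m$ \emph{on the nose}, collapses the paper's case analysis into one uniform computation: the two cases of the paper correspond exactly to $\bigcap_i a_i\m$ equaling $\bigcap_i a_iT$ or equaling $a_i\m$ after pulling out a $T$-unit, so your identity subsumes both. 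Your finishing step is also cleaner and more rigorous than the paper's: once $I$ is known to be a nonzero $T$-ideal, the count $\dim_k(I/\m I)=[k(X):k]\cdot\dim_{k(X)}(I/\m I)$ together with two applications of Nakayama over the local ring $(D,\m)$ gives non-finite-generation directly, whereas the paper's infinite-generating-set argument strictly speaking only exhibits one infinite generating set rather than excluding all finite ones (though it is easily repaired along the lines you give). What the paper's version buys in exchange is an explicit description of $I$ in each case (e.g. $a_i^{-1}I=\m$ in the degenerate case), which is occasionally useful elsewhere; your version buys brevity and a reusable lemma (any nonzero $T$-ideal fails to be finitely generated over $D$) at no cost to the stated theorem.
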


\begin{proof}
First we observe that any proper ideal $I$ of $T$ (including $\m$) it is not finitely generated as an ideal of $D$. Indeed, let $ \lbrace  x_l \rbrace_{l \in L} $ be a minimal set of generators for $I$ over $T$ and let $ \lbrace  f_h(X) \rbrace_{h \in H} $ be a set of generators of $k(X)$ as a $k$-algebra. Notice that $H$ is an infinite set. Thus, $I$ is generated as an ideal of $D$ by the elements of the infinite set $ \lbrace  x_lf_h(X) \rbrace_{l \in L, h \in H}. $ 

Let $I= (a_1) \cap (a_2) \cap \ldots \cap (a_n)$ for some pairwise incomparable elements $a_1, a_2, \ldots, a_n \in \m \subseteq D$.  Clearly $$I \subseteq J= a_1T \cap a_2T \cap \ldots \cap a_nT.$$ Take $f \in J$, hence $ \frac{f}{a_i} \in T $ for every $i=1,\ldots, s$. Now, if $ \frac{f}{a_i} \in \m $ for every $i$, we have $$ f= a_i \dfrac{f}{a_i} \in a_i\m \subseteq a_iD $$ and hence $f \in I$. When this happens for every $f \in J$, it follows that  $I=J$  is not finitely generated as an ideal of $D$. If instead, for some $f \in J$, we have $ \frac{f}{a_i} \in T \setminus \m $, then we have $ J= a_iT $ and hence $ \frac{a_i}{a_j} \in T $ for every $j$. By the assumption of having $a_i \not \in a_jD$ for every $j \neq i$, we need to know  that $ \frac{a_i}{a_j} = f_j(X) $ is a unit of $T$ and  is not in $D$ (hence  is a non-constant element of $k(X)$). Therefore, we consider the ideal $$ a_i^{-1}I = D \cap \bigcap_{j \neq i} g_j(X)D $$ where $ g_j(X)=f_j(X)^{-1} $. Such an ideal $ a_i^{-1}I $ is a proper ideal of $D$ since all the elements $ g_j(X),f_j(X) \not \in D $. Moreover, let $b \in \m$. For every $j$, we can write $b= g_j(X) (b f_j(X)) \in g_j(X)\m \subseteq g_j(X)D$. Hence in this case $ a_i^{-1}I = \m $ it is not finitely generated in $D$, and so neither is $I$.
\end{proof}

Taking the $k+\m$ construction where $\m$ is the maximal ideal of $T=k(X)[Y]_{(Y)}$, we get a non-Pre-Schreier SBID. Indeed in a Pre-Schreier domain irreducible elements are prime, while in $T$ elements such as $Y$ or $ Yf(X)$ are irreducible but not prime, since, for instance, $ (YX)(\frac{Y}{X}) \in (Y)$ but $ X, \frac{1}{X} \not \in k+\m $.

\medskip

We generalize the last theorem to the case of a pullback square of type $\square^{\ast}$ with local lower-left corner. For an extensive study about pullback construction in commutative ring theory see for example \cite{Gabelli-Houston, GH}. Our setting is the following: 
let $T$ be a local domain with maximal ideal $\m$ and let $B$ be an integral domain having quotient field $ \kappa:= \frac{T}{\m}.$ Let $\phi: T \to \kappa$ be the canonical surjective map. Define the ring $D:= \phi^{-1}(B)$ as in the pullback diagram:
\begin{center}
\begin{tikzcd}
   D \arrow[rightarrow]{r}\arrow[hookrightarrow]{d} 
  &  B \arrow[hookrightarrow]{d}
  \\ 
   T \arrow{r}
 &  \kappa
\end{tikzcd}
\end{center}
We recall some standard facts about this kind of pullback square with $T$ a local domain. 

\begin{lemma}
\label{lemmapulb} Assume the notation of the above pullback square. \begin{enumerate}
\item The ideal $\m$ is a divided prime ideal of $D$ (i.e. $\m \subseteq (x)$ for every $x \in D \setminus \m$).
\item For every $a,b \in D \setminus \m$, $\phi(a) \in \phi(b)B$ if and only if $a \in bD$.
\item Given a principal ideal $bB$ of $B$, $\phi^{-1}(bB)$ generates a principal ideal of $D$. 
\item Let $J$ be an ideal of $B$, then $J$ is finitely generated if and only if $\phi^{-1}(J)$ is finitely generated.
\end{enumerate}
\end{lemma}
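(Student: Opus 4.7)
The plan rests on two structural observations: first, any $x \in D \setminus \m$ is a unit in the local ring $T$ (since $\m$ is the unique maximal ideal of $T$ and $x \notin \m$); second, $\phi$ restricts to a surjection $D \to B$ with kernel $\m$, so $D/\m \cong B$. Every element of $B$ therefore admits a lift to some element of $D$, and this lift lies outside $\m$ precisely when the original element is nonzero.

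For (1), given $x \in D \setminus \m$ and $y \in \m$, I would note that $x$ is a unit of $T$, so $y/x \in T$. Because $\m$ is an ideal of $T$ containing $y$ and $x^{-1} \in T$, in fact $y/x \in \m \subseteq D$, so $y = x \cdot (y/x) \in xD$. For (2), the implication $a \in bD \Rightarrow \phi(a) \in \phi(b) B$ is obtained by applying $\phi$. Conversely, if $\phi(a) = \phi(b) c$ with $c \in B$, then $b$ being a unit of $T$ lets us form $a/b \in T$, and $\phi(a/b) = c \in B$ forces $a/b \in \phi^{-1}(B) = D$.

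For (3), assume $b \neq 0$ (the zero case being immediate) and lift $b$ to $\tilde{b} \in D \setminus \m$. The inclusion $\tilde{b} D \subseteq \phi^{-1}(bB)$ is routine; for the reverse, given $x \in \phi^{-1}(bB)$, split into cases. If $x \in \m$, apply (1) to $\tilde{b}$ to get $x \in \tilde{b} D$; if $x \notin \m$, apply (2). Part (4) follows the same template: if $\phi^{-1}(J) = (x_1, \ldots, x_n) D$, then applying $\phi$ yields $J = (\phi(x_1), \ldots, \phi(x_n)) B$. Conversely, if $J = (b_1, \ldots, b_n) B$ with $b_1 \neq 0$, lift each $b_i$ to $\tilde{b}_i \in D$ with $\tilde{b}_1 \notin \m$. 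For any $x \in \phi^{-1}(J)$, write $\phi(x) = \sum c_i b_i$, lift each $c_i$ to $\tilde{c}_i \in D$, and observe that $x - \sum \tilde{c}_i \tilde{b}_i \in \m \subseteq \tilde{b}_1 D$ by (1); hence $x \in (\tilde{b}_1, \ldots, \tilde{b}_n) D$.

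The only subtle point, more than a real obstacle, is the degenerate case $J = 0$ in (4): $\phi^{-1}(0) = \m$, which may well fail to be finitely generated in $D$ (as the proof of Theorem \ref{k+m} itself exploits). The statement should therefore be read with the implicit convention that $J$ is nonzero, which is exactly what supplies the generator $\tilde{b}_1 \in D \setminus \m$ needed to absorb $\m$ into the ideal $(\tilde{b}_1, \ldots, \tilde{b}_n) D$ via part (1).
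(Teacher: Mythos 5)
Your proof is correct, and for parts (3) and (4) it takes a genuinely different --- and more self-contained --- route than the paper. The paper proves (1) and (2) directly, much as you do: for (1) it computes $\phi(y/x)=\phi(y)\phi(x)^{-1}=0$, hence $y/x\in\ker\phi=\m$; for (2) it instead writes $a=bc+y$ with $y\in\m$ and observes that $1+\tfrac{y}{bc}$ is a unit of $D$ because $\m$ lies in every maximal ideal of $D$, whereas you divide by the $T$-unit $b$ and invoke $\phi^{-1}(B)=D$ --- your version is arguably cleaner and avoids the Jacobson-radical detour. For (3) and (4), however, the paper simply cites the general pullback results of Gabelli--Houston \cite{Gabelli-Houston}, while you derive them directly from (1), (2) and the surjectivity of $\phi|_D:D\to B$; this makes the lemma self-contained at the cost of a little length, whereas the citation buys brevity and places the lemma inside the general theory of pullbacks. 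The one quibble is your parenthetical in (3): the zero case is not in fact immediate, since $\phi^{-1}(0\cdot B)=\m$ generates $\m$, which need not be principal. So (3), like (4), must be read with the convention that the ideal in question is nonzero --- a convention you identify correctly for (4), and the one under which the lemma is actually applied in Theorem \ref{pullback} --- so this does not affect the substance of your argument.
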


\begin{proof}
(1) For $y \in \m$ and $x \in D \setminus \m$, $\phi(\frac{y}{x})= \phi(y)\phi(x)^{-1} = 0 \in B$ and thus $ \frac{y}{x} \in \m \subseteq D $. \\
(2) The ideal $\m$ is contained in every maximal ideal of $D$, and hence for every unit $u \in D$ and $y \in \m$, $u+y$ is a unit of $D$. We have $\phi(a)=\phi(b)\phi(c)$ if and only if there exists $y \in \m$ such that $a=bc+y= bc(1+\frac{y}{bc})$ if and only if $a \in bD$. \\
(3) It follows by \cite[Theorem 2.21, Proposition 2.22]{Gabelli-Houston} using the fact that $T$ is local. \\
(4) It follows by \cite[Proposition 2.14]{Gabelli-Houston}.
\end{proof}

\begin{theorem} 
\label{pullback}
Take the notation of the above pullback square.
The ring $D$ is BID (resp. SBID) if and only if $B$ is BID (resp. SBID).
\end{theorem}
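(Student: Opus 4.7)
The plan is to use Lemma \ref{lemmapulb}, which provides a dictionary between ideals of $D$ containing $\m$ and arbitrary ideals of $B$ that preserves both principality and finite generation. Throughout I assume $B\subsetneq\kappa$; in the trivial case $B=\kappa$ one has $D=T$ and both statements collapse. The argument splits according to where the chosen incomparable elements sit relative to the divided prime $\m$.

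For the direction ``$D$ is (S)BID $\Rightarrow$ $B$ is (S)BID'', I would lift pairwise incomparable $b_1,\dots,b_n\in B$ along $\phi$ to elements $d_i\in D\setminus\m$, which by Lemma \ref{lemmapulb}(2) are pairwise incomparable in $D$, and verify the identity $\bigcap d_iD=\phi^{-1}(\bigcap b_iB)$. The nontrivial inclusion uses $\m\subseteq d_iD$ (Lemma \ref{lemmapulb}(1)) to absorb the ambiguity in the chosen lifts. Parts (3) and (4) of Lemma \ref{lemmapulb} then transfer both principality and finite generation from $\bigcap d_iD$ to $\bigcap b_iB$.

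For the reverse direction, take pairwise incomparable $a_1,\dots,a_n\in D$. By Lemma \ref{lemmapulb}(1), either all $a_i$ lie outside $\m$ or all lie in $\m$; the first alternative is symmetric to the previous paragraph, via the images $\phi(a_i)\in B$. The delicate case is when all $a_i\in\m$, which I would handle in the spirit of Theorem \ref{k+m}. Setting $I=\bigcap a_iD$ and $J=\bigcap a_iT$, there are two sub-cases. If for every $f\in J$ and every $i$ one has $\phi(f/a_i)=0$, then each $f/a_i\in\m\subseteq D$ forces $f\in a_iD$, so $I=J$ is a nonzero proper $T$-ideal. This is the main technical obstacle: a proper $T$-ideal is never finitely generated as a $D$-ideal when $B\subsetneq\kappa$, which I would establish by a Nakayama-style argument---any $D$-generating set of $I$ also generates $I$ as a $T$-module, so $I/\m I$ is simultaneously a nonzero finite-dimensional $\kappa$-vector space and a finitely generated $B$-module, contradicting the fact that $\kappa$ is not finitely generated over its proper subring $B$. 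If instead some $f/a_i$ is a unit of $T$, then all $a_j$ are $T$-associate to $a_i$; writing $a_j=u_ja_i$ with $u_j\in T^{\times}$ one checks that $a_i^{-1}I=\phi^{-1}(\bigcap_j \phi(u_j)B)$ as an ideal of $D$, and clearing a common denominator $c\in B$ presents $\bigcap_j\phi(u_j)B$ as $1/c$ times an intersection of pairwise incomparable principal ideals of $B$. The (S)BID hypothesis on $B$, together with Lemma \ref{lemmapulb}(3)--(4) and multiplication by $a_i$, then yields the desired conclusion for $I$.
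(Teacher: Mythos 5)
Your proof is correct and shares the paper's overall architecture: both split according to whether the incomparable elements all lie in $\m$ or all lie in $D\setminus\m$ (the dichotomy forced by $\m$ being divided), and both handle the second case by transporting the intersection through $\phi$ via Lemma \ref{lemmapulb}(2)--(4). The genuine difference is in the case $a_1,\dots,a_n\in\m$. The paper first disposes of the possibility $I=xI$ for a non-unit $x\in D\setminus\m$ by Nakayama, and otherwise takes $z\in I$ with $z/x\notin I$, writes $z=a\gamma$ with $\gamma\in(\frac{a}{a_1},\dots,\frac{a}{a_n})^{-1}$, and uses dividedness of $\m$ to conclude $\gamma^{-1}\in D$ and that the $a_i$ are associates in $T$, thereby reducing to the other case. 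You instead compare $I$ with $J=\bigcap a_iT$ exactly as in the proof of Theorem \ref{k+m}: either every $f/a_i$ lies in $\m$, so $I=J$ is a nonzero proper $T$-ideal, or some $f/a_i$ is a unit of $T$, which together with incomparability (note that $a_i/a_j\in\m$ would give $a_i\in a_j\m\subseteq a_jD$) again yields $T$-associates. Your route needs the extra structural fact that a nonzero proper $T$-ideal is never a finitely generated $D$-ideal; your argument for it (a $D$-generating set is a $T$-generating set, so $I/\m I$ would be a nonzero finite-dimensional $\kappa$-vector space that is also a finitely generated $B$-module, impossible since $\kappa=Q(B)\neq B$) is correct and in fact generalizes the opening paragraph of the proof of Theorem \ref{k+m}, which the paper establishes only in the $k+\m$ setting via a $k$-algebra generating set of $k(X)$. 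What each approach buys: the paper's $\gamma$-argument stays entirely inside $D$ and needs no statement about $T$-ideals, while yours isolates a reusable lemma and makes the mechanism of non-finite generation more transparent. One caveat: when $B=\kappa$ the statement does not so much ``collapse'' as become false ($D=T$ need not be BID while $B$ vacuously is); the standing hypothesis of a pullback of type $\square^{\ast}$ excludes this, and both your proof and the paper's tacitly use $B\subsetneq\kappa$, for instance to produce a non-unit of $D$ outside $\m$.
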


\begin{proof} 
Let $I= (a_1) \cap (a_2) \cap \ldots \cap (a_n)$ for some pairwise incomparable elements $a_1, a_2, \ldots, a_n \in D$. Hence, there are only two possible cases: either $ a_1, a_2, \ldots, a_n \in \m $ or $ a_1, a_2, \ldots, a_n \in D \setminus \m $ and they all are non-units. \\
\bf Case 1: $ a_1, a_2, \ldots, a_n \in \m $. \rm \\
Let $x \in D \setminus \m$ a non-unit, if $I=xI$, clearly $I$ is not finitely generated as a consequence of Nakayama's Lemma. 
Hence, assume the opposite condition. Any element $z \in I$  can be always expressed in the form $z=a\gamma$ where $a= \prod_{i=1}^n a_i$ and $ \gamma \in (\frac{a}{a_1}, \frac{a}{a_2}, \ldots, \frac{a}{a_n})^{-1} $. Take $z \in I$ such that $\frac{z}{x} \not \in I$ and assume without loss of generality $\frac{z}{x} \not \in (a_1)$. It follows that $\frac{z}{a_1} \not \in (x)$ and thus $\gamma \frac{a}{a_1} \not \in \m$. 
Since the $a_i$ are incomparable and $\m$ is divided, this implies that $\gamma \frac{a}{a_i} \not \in \m$ for every $i$. But $\frac{a}{a_i} \in \m$, and this implies $\frac{a}{a_i} \in (\gamma \frac{a}{a_i})$ and hence $\gamma^{-1}= d \in D.$ Moreover, for every $i$, $ \frac{a}{a_id} \in T \setminus \m $ and hence $ \frac{a}{a_i} $ is associated to $d$ in $T$. It follows $a_iT = a_jT$ for every $i,j$, and hence $$ I= a_1 \left( D \cap u_2D \cap \ldots \cap u_nD \right)  $$ where $u_i$ are units of $T$. Since $T$ is local, by \cite[Proposition 1.7]{Gabelli-Houston}, $T=D_{\m}$ and hence $ u_i = \frac{h_i}{l_i} $ with $h_i,l_i \in D \setminus \m$. By multiplying a common factor, we get $$ I= w \left( (b_1) \cap (b_2) \cap \ldots \cap (b_n) \right) $$ where $b_i \in D \setminus \m$, and therefore we consider this intersection in the second case.  \\
\bf Case 2: $ a_1, a_2, \ldots, a_n \in D \setminus \m $. \rm \\
Using Lemma \ref{lemmapulb}(2), we observe that $$ \phi(I)= \bigcap_{i=1}^n \phi(a_i)B $$ and for some pairwise incomparable elements $b_1, b_2, \ldots, b_n \in B$, $$ \phi^{-1} \left( \bigcap_{i=1}^n b_iB \right) = \bigcap_{i=1}^n \phi^{-1}(b_i)D.$$ Clearly if $I$ is principal or not finitely generated, also $ \phi(I) $ is respectively principal or not finitely generated. For an ideal $ J=\bigcap_{i=1}^n b_iB $ principal or not finitely generated, we obtain that $\phi^{-1}(J)$ is respectively principal or not finitely generated using (3) and (4) of Lemma \ref{lemmapulb}. This concludes the proof.
\end{proof}

The previous theorem shows that the Bezout Intersection property, unlike   the Pre-Schreier property, has  bad behavior with respect to localizations. For a prime number $p$, consider the ring $D= \Z_{(p)} + \m$, where $\m$ is the maximal ideal of $T=\Q[X^2,X^3]_{(X^2,X^3)}$. By Theorem \ref{pullback}, $D$ is SBID, but $T=D_{\m}$ is clearly not BID.

In the next construction, we show that localization of a BID can fail also at a maximal ideal.

\begin{construction}
\label{construction} \rm 
Start with three indeterminates $x,y,z$ over a field $k$ and consider the ring $R=k[y^2,y^3,x]_{(y^2,y^3,x)}$. For $n \geq 1$, set $$ R_n = R \left[ \frac{y^2}{x^n}, \frac{y^3}{x^n}\right]_{(\frac{y^2}{x^n}, \frac{y^3}{x^n}, x)},$$ and call $D$ the directed union $D = \bigcup_{n\geq 1}^{\infty}R_n$. The ring $D$ is local with principal maximal ideal generated by $x$. Call $Q=\bigcap_{n\geq 1}^{\infty}x^nD$ the non-finitely generated prime ideal of $D$ adjacent to the maximal ideal.
The ring we want to study is defined as $$ A:= D(z) \cap D[z]_{(Q,z)}. $$ Elements such as $z+x$ are units in $A$ and therefore $A$ is a semilocal ring with two maximal ideals $\mathfrak{n}= xA$ and $\m=(Q,z)A$. 
Moreover, $A_{\mathfrak{n}}= D(z)$ and $$A_{\m}= A [x^{-1}] = D[z]_{(Q,z)} = k(x)[y^2, y^3, z]_{(y^2, y^3, z)}. $$
In the next theorem we show that $A$ is a BID. It is clear that its localization $S:= A_{\m}$ is not a BID since the intersection $y^2S \cap y^3S = (y^5,y^6)S$. 
\end{construction}

\begin{lemma} 
\label{l11}
Let $A$ be the ring defined in Construction \ref{construction}. Call as above $S:= A_{\m}$ and consider also its integral closure $\overline{S}= S[y]$. Let $b \in \overline{S}$ and $c \in A \setminus \mathfrak{n}$, and assume $bc \in A$. It follows that $b \in A$.
\end{lemma}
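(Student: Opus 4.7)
The plan is to deduce $b \in A = A_\mathfrak{n} \cap A_\m$ by showing $b \in A_\mathfrak{n}$ and $b \in A_\m = S$ separately. The first inclusion is immediate: since $c \notin \mathfrak{n}$, the element $c$ is a unit in $A_\mathfrak{n} = D(z)$, so $b = (bc) c^{-1} \in A_\mathfrak{n}$ follows from the hypothesis $bc \in A \subseteq A_\mathfrak{n}$.

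For the second inclusion, introduce the $2$-dimensional regular local ring $R_0 := k(x)[y^2, z]_{(y^2, z)} \subseteq S$ (a UFD with $y^2$ prime). Since $y^2 \in R_0$ and $y \notin \mathrm{Frac}(R_0)$, there are free $R_0$-module decompositions $\overline{S} = R_0 \oplus R_0 y$ and $S = R_0 \oplus R_0 y^3$, with the latter sitting in the former as $R_0 \oplus (y^2 R_0) y$. Writing $b = b_0 + b_1 y$ and $c = c_0 + c_1 y$ with $b_i, c_i \in R_0$, the inclusion $c \in S$ forces $c_1 \in y^2 R_0$, and the direct expansion
$$bc = (b_0 c_0 + b_1 c_1 y^2) + (b_0 c_1 + b_1 c_0) y$$
combined with $bc \in A \subseteq S$ shows $b_0 c_1 + b_1 c_0 \in y^2 R_0$. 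Since $b_0 c_1 \in y^2 R_0$ already, we conclude $y^2 \mid b_1 c_0$ in $R_0$. By primality of $y^2$, either $y^2 \mid b_1$ (whence $b \in S$, and we are done) or $y^2 \mid c_0$, in which case $c \in y^2 \overline{S}$.

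To finish, it remains to show that the second alternative forces $c \in \mathfrak{n}$, contradicting the hypothesis. Pass to the $(y,z)$-adic completion $k(x)[[y, z]]$ of $\overline{S}$ and observe that every element of $D$ has vanishing $y$-linear coefficient in its Taylor expansion: the generators $x, y^2/x^n, y^3/x^n$ of each $R_n$ all have zero coefficient of $y$, and this property is preserved under sums, products, and inverses of units in $k(x)[[y]]$. Now write $c = h/l$ with $h, l \in D[z]$ and $l$ of unit content in $D$, and read the identity $cl = h$ inside $k(x)[[y, z]]$. Comparing coefficients of $y^0 z^j$, the condition $c \in y^2 \overline{S}$ (equivalently $c_{0j} = 0$ for every $j$) forces each $z$-coefficient $h_j \in D$ of $h$ to satisfy $h_j|_{y=0} = 0$. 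But any unit of $D$ has nonzero value at $y = 0$ (because $u \cdot u^{-1} = 1$ in $k(x)[[y]]$ forces $u(0) \neq 0$), so each $h_j$ lies in $\m_D = xD$. Consequently $h \in xD[z]$, and $c = x(h/x)/l \in xD(z) = \m_{D(z)}$, giving $c \in \mathfrak{n}$. The main obstacle is this last power-series analysis, which hinges on the explicit form of the generators of each $R_n$.
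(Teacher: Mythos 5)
Your proof is correct, but it takes a genuinely different route from the paper's. The paper decomposes $b$ over $A$ via $\overline{S}=A[x^{-1},y]$, writing $b=a+a_0y+\sum_{i\ge 1}a_ix^{-i}$ with $a,a_i\in A$, and kills the unwanted terms using the conductor identities $(A:_A y)=Q$ and $(A:_A x^{-n})=(x^n)$ together with the primality of $\mathfrak{n}$ and $Q$. You instead split the target as $A=A_{\mathfrak{n}}\cap A_{\m}$: the $\mathfrak{n}$-half is immediate since $c$ is a unit in $A_{\mathfrak{n}}=D(z)$, and the $\m$-half comes from the free decompositions $\overline{S}=R_0\oplus R_0y$ and $S=R_0\oplus(y^2R_0)y$ over the regular local ring $R_0=k(x)[y^2,z]_{(y^2,z)}$ together with the primality of $y^2$ in $R_0$. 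The price is your final step, the verification that $y^2\overline{S}\cap A\subseteq\mathfrak{n}$ via the coefficient analysis in $k(x)[[y,z]]$ (every element of $D$ has vanishing linear $y$-coefficient, and units of $D$ have nonzero value at $y=0$); this plays exactly the role of the paper's unproved assertion $(A:_A y)=Q$, and is the one place where the explicit generators of the $R_n$ enter. Both arguments sit at a comparable level of rigor — the paper also asserts its decomposition of $b$ and the conductor facts without proof — and yours has the merit of isolating cleanly the two places where $c\notin\mathfrak{n}$ is used (to invert $c$ in $A_{\mathfrak{n}}$, and to exclude $c\in y^2\overline{S}$), at the cost of introducing the auxiliary ring $R_0$ and the completion.
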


\begin{proof} 
Use the fact that $ \overline{S}=A[x^{-1}, y] $ to express $b$ as $$ b= a + a_0y + \sum_{i \geq 1} a_ix^{-i} $$ where $a,a_i \in A$, $a_0 \not \in Q$, $a_i \not \in (x)$, and only finitely many $a_i$ are nonzero. The element $bc \in A$ if and only if $ca_0y \in A$ and also each summand $ca_ix^{-1} \in A$, but this is impossible since $(A :_{A} y)=Q$, $(A :_{A} x^{-n})= (x^n)$ and $c \not \in \mathfrak{n}$ which is a prime ideal. Hence, the only possible way to have $bc \in A$ is to have $a_i = 0$ for every $i \geq 0$ and therefore $b \in A.$ 
\end{proof}

\begin{theorem} 
\label{localization}
The ring $A$ defined in Construction \ref{construction} is a BID.
\end{theorem}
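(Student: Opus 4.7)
The plan is to assume, for contradiction, that $I = (a_1) \cap \cdots \cap (a_n)$ is finitely generated but not principal for some pairwise incomparable $a_1, \ldots, a_n \in A$, and derive a contradiction by producing a single generator. The overall strategy exploits the semilocal decomposition $A = A_\mathfrak{n} \cap A_\mathfrak{m}$, the principality of $\mathfrak{n} = xA$, the fact that the integral closure $\overline{S} = S[y] = k(x)[y,z]_{(y,z)}$ of $S = A_\mathfrak{m}$ is a two-dimensional regular local ring (hence a UFD), and Lemma \ref{l11} as the descent tool from $\overline{S}$ into $A$.

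First I would normalize the $a_i$: since they are pairwise incomparable they are non-units, so each lies in $\mathfrak{n}$ or $\mathfrak{m}$. Factoring out $x^e$ with $e = \min_i v_x(a_i)$ (legal because $\mathfrak{n} = xA$ is principal) reduces to the case where at least one $a_i$ is a unit in $A_\mathfrak{n}$. A case split on the distribution of the $a_i$ between $\mathfrak{n}$ and $\mathfrak{m}$ handles the boundary situations (some $a_i$ outside one of the two maximal ideals) by reducing to a smaller intersection inside the relevant localization. In the main case all $a_i$ lie in $\mathfrak{n} \cap \mathfrak{m}$ with no common $x$-factor, and the key computation takes place in the UFD $\overline{S}$: there $\bigcap_i a_i \overline{S} = g\overline{S}$ is principal, with $g = \operatorname{lcm}_{\overline{S}}(a_i)$, so $I \subseteq g\overline{S} \cap A$, making $g$ the natural candidate generator of $I$ provided it lies in $A$.

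The main obstacle, and the heart of the proof, is the descent step: showing $g \in A$ and $I = gA$. Assuming $f_1, \ldots, f_k$ finitely generate $I$, one obtains an identity $g = \sum_j \alpha_j f_j$ with $\alpha_j \in \overline{S}$, via a Bezout-coefficient argument that requires a careful comparison of $I\overline{S}$ with $g\overline{S}$ (these need not coincide in general, so this comparison must be performed after refining the generators $f_j$ using the previous normalization). Then Lemma \ref{l11} upgrades $g$ from $\overline{S}$ into $A$: it suffices to construct an explicit $c \in A \setminus \mathfrak{n}$ with $cg \in A$, exploiting the $S$-module decomposition $\overline{S} = S + Sy$ and control over the conductor $(A :_A y)$ to clear the $y$-components appearing in the $\alpha_j$'s. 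Once $g \in A$ is established, the same descent applied to each quotient $f/g$ with $f \in I$ yields $I = gA$, contradicting non-principality. Constructing the descent element $c$, and making the comparison between $I\overline{S}$ and $g\overline{S}$ sufficiently sharp to extract a Bezout identity for $g$, is where the main technical work will lie.
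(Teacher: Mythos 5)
Your setup (reduce via the principal maximal ideal $\mathfrak{n}=xA$, pass to the UFD $\overline{S}=S[y]$, take $g=\mathrm{lcm}_{\overline{S}}(a_1,\dots,a_n)$ as candidate generator, and use Lemma \ref{l11} to descend from $\overline{S}$ to $A$) is exactly the paper's framework, and your descent step is essentially the paper's final case. But the overall logical scheme --- assume $I$ finitely generated and non-principal, then force $I=gA$ --- has a genuine gap: it cannot close in the situations where $I$ really is non-finitely generated, and your proposal contains no mechanism for detecting or handling those situations. Concretely, the paper splits on the position of the ideal $J=(g/a_1,\dots,g/a_n)$ relative to $A$, $Q$ and $(x)$. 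If some $g/a_i\in\overline{S}\setminus A$, then $g\notin I$ even though $Qg\subseteq I$ (because $Q\overline{S}\subseteq A$), and $I$ fails to be finitely generated because $Q$ does; your Bezout identity $g=\sum_j\alpha_jf_j$ with $\alpha_j\in\overline{S}$ would need $g\in I\overline{S}$, which you do not justify and which in any case does not help here. Worse, if all $g/a_i$ lie in $Q$, then $g\in A$ and $g\in I$, so every step of your descent for $g$ itself goes through --- yet $I\neq gA$, since $I$ also contains $gyx^{-n}$ for every $n\geq 0$ and is not finitely generated. In that situation there is no $c\in A\setminus\mathfrak{n}$ available for Lemma \ref{l11} applied to the quotients $f/g$ (the natural candidates $g/a_i$ all lie in $Q\subseteq\mathfrak{n}$), so no contradiction with non-principality ever materializes; the only way out is to prove non-finite generation directly by exhibiting the infinite family $gyx^{-n}$, which is a different kind of argument from the one you propose.

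So the missing idea is the trichotomy on $J=(g/a_1,\dots,g/a_n)$: (1) some $g/a_i\in\overline{S}\setminus A$; (2) $J\subseteq Q$; (3) $J\subseteq A$ with $J\not\subseteq Q$. Only in case (3), after dividing out a common power of $x$ so that some $c:=g/a_i\notin(x)$, does your descent work --- and there it works without any Bezout identity: $g\in I$ is automatic, and for $f\in I$ one applies Lemma \ref{l11} to $b=f/g$ with that $c$, since $bc=f/a_i\in A$. In cases (1) and (2) the correct conclusion is that $I$ is not finitely generated, proved by exhibiting infinitely many necessary generators supplied by the non-finitely generated prime $Q$; your contradiction scheme cannot reach that conclusion. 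The ``main technical work'' you defer (comparing $I\overline{S}$ with $g\overline{S}$ and constructing the descent element $c$) is precisely where the argument breaks in those two cases.
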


\begin{proof} 
First let $a \in \mathfrak{n} \setminus \m$, assume without loss of generality $a= x^n$ and let $b \in A$ be a non-unit. We have the following possible cases.
If $b \in \mathfrak{n} \setminus \m$ or $b \in \m\mathfrak{n}$ but $x^n$ divides $b$, $a$ and $b$ are comparable.
If $b \in \m \setminus \mathfrak{n}$ then $a$ and $b$ are comaximal and $(a) \cap (b)= (ab)$. If instead
 $b \in \m\mathfrak{n}$ and the largest power of $x$ dividing $b$ is $k$ with $1 \leq k < n$, then $b=x^kc$ for some $c \in \m \setminus \mathfrak{n}$ and thus $ (a) \cap (b) = x^k ((x^{n-k}) \cap (c))) $ is principal and contained in $\m\mathfrak{n}$.

From these facts, in order to study a finite intersection of principal ideals of $A$, we can reduce to considering only elements in $\m$. Hence let $I= (a_1) \cap (a_2) \cap \ldots \cap (a_n)$ for some pairwise incomparable elements $a_1, a_2, \ldots, a_n \in \m$. Such elements are all non-units in the overring $\overline{S} = S[y]$ that is a regular local ring, hence a UFD. It follows that $$I \subseteq a_1\overline{S} \cap a_2\overline{S} \cap \ldots \cap a_n\overline{S} = \left( \frac{a}{s} \right) \overline{S}=: w\overline{S} $$ where $a= a_1 a_2 \cdots a_n$ and $s \in \overline{S}$ is the gcd of $a_1, a_2, \ldots, a_n$ in $\overline{S}$. We consider now three possible cases: \\
\bf Case 1: \rm There exists $i \in \lbrace 1,2, \ldots, n \rbrace$ such that $ \frac{w}{a_i} \in \overline{S} \setminus A$. \\
The ideal $Q$ is generated by all the elements of the form $ \frac{y^2}{x^n}, \frac{y^3}{x^n} $ for every $n \geq 0.$ Hence, since $\overline{S}= A[x^{-1}, y]$, it is clear that $Q\overline{S} \subseteq A$. It follows that for every element $q \in Q$, $qw\in I$. Furthermore since $ \frac{w}{a_i} \not \in A$, clearly $w \not \in I$. This makes $I$ not finitely generated since $Q$ is not finitely generated. \\
\bf Case 2: \rm  The ideal $ J= (\frac{w}{a_1}, \frac{w}{a_2}, \ldots, \frac{w}{a_n}) \subseteq Q.$ \\
In this case $w \in I$ and since $Q\overline{S} \subseteq A$, the element $w(yx^{-n}) \in I$ for every $n \geq 0$. These elements form an infinite set and they are all needed as generators for $I$.  \\
\bf Case 3: \rm  The ideal $ J= (\frac{w}{a_1}, \frac{w}{a_2}, \ldots, \frac{w}{a_n}) \subseteq A \setminus Q. $ 
\\ Also in this case $w \in I$ and every element of $I$ is of the form $ wb$ with $b \in \overline{S}.$ If $J \subseteq (x^n)$ for some $n$ (maximal with respect to this property), we get $wx^{-n} \in I$ and hence, by eventually replacing $w$ by $wx^{-n}$, we may assume $J \nsubseteq (x)$.
Now assume $ c:= \frac{w}{a_1} \not \in (x) $ and take $wb \in I$ with $b \in \overline{S}$. It follows that $bc \in A$. By Lemma \ref{l11}, this implies $b \in A$ and hence $I$ is principal generated by $w$. 
\end{proof}

\begin{remark} 
\label{r11}
The ring $A$ described above shows that it is possible to have examples of intersections of three principal ideals that are principal while the intersection of two of them is not finitely generated.
We want to compute the intersection $$ I:= (y^2z) \cap (y^3(z-y^2)) \cap (y^5(z-y^3)). $$ Following the proof of Theorem \ref{localization}, in this case $w:= y^5 z (z-y^2)(z-y^3)$ and $$  \frac{w}{y^5(z-y^3)}= z (z-y^2) \in A \setminus Q. $$ Hence by Case 3, $I=(w)$.
But $$
(y^2z) \cap (y^3(z-y^2)) =  \\ z(z-y^2) \left( y^5, y^6, \frac{y^5}{x}, \frac{y^6}{x}, \frac{y^5}{x^2}, \frac{y^6}{x^2}, \ldots  \right)    $$ is not finitely generated.
\end{remark}

In the counterexamples shown above one can observe that localization fail to be a BID when made with respect to a prime ideal which is not a maximal $t$-ideal. We recall the definition of the usual star operations $v$ and $t$. For an integral ideal $I$ of $D$, $I_v= (I^{-1})^{-1}$ and $$ I_t= \bigcup_{\substack{J \subseteq I \\ J \, f.g. }} J_v.$$ An ideal $I$ is \it divisorial \rm if $I=I_v$ and is a \it $t$-ideal \rm if $I=I_t$. There always exist ideals maximal with respect to the property of being $t$-ideals, they are called \it maximal $t$-ideals \rm and they are prime ideals. 
It is still uncertain whether the localization of a BID with respect to a maximal $t$-ideal is a BID. We leave this as a question:

\begin{question} 
Let $D$ be a Bezout Intersection domain and let $P$ be a maximal $t$-ideal of $D$. Is $D_P$ a BID?
\end{question}

In next section we will show that the answer is yes if assuming $D$ integrally closed and $P$ a well-behaved $t$-ideal (i.e. $PD_P$ is also a $t$-ideal). For this, see Remark \ref{kaplansky} and Theorem \ref{carattsbid}.
For references about well-behaved $t$-ideals, the reader may consult \cite{z3}, \cite{ACZ}.

\medskip

To conclude this section, we describe an example of a BID Krull domain.
In Corollary \ref{c1} it is shown that a Pr\"{u}fer BID has to be a Bezout domain. One may naturally ask if a Bezout Intersection PvMD is necessarily a GCD domain. The answer is no, here we provide an example of a BID non-GCD domain that is also a Krull domain, hence a PvMD.
To this purpose, we consider the ring described in \cite[Example 2.5]{glaz1} as a standard example of a Krull domain that is not a finite conductor domain. Let $ \lbrace X_n \rbrace_{n \in \N} $ be a countable collection of indeterminates over a field $K$, set $$ D=K[\lbrace X_iX_j \rbrace_{i,j \in \N}], $$ and call $Q(D)$ the quotient field of $D$. The ring $D$ is a Krull domain and, setting $ R=K[\lbrace X_n \rbrace_{n \in \N}] $, we can express $D$ as the intersection $D=R \cap Q(D)$. We want to prove that $D$ is a BID. 

As a $K$-vector space, $D$ is the subspace of $R$ generated by the monomials of even degree. We denote by $A$ the $K$-vector subspace of $R$ generated by the monomials of odd degree (this vector space is clearly not a ring), and for every $f \in R$, we write $f= f_p + f_d$ where $f_p \in D$ and $f_d \in A$. 


\begin{lemma} 
\label{krulllemma}
Let $h \in R$ be an irreducible polynomial such that $h_p,h_d \neq 0$. Then, there exists a unique 
polynomial $h^{\prime} \in R$, such that $hh^{\prime} \in D$ and $ h^{\prime} $ is the minimal polynomial  with  respect to divisibility satisfying this condition. 
\end{lemma}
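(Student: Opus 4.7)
The plan is to exploit the fact that $D$ is the fixed subring of a natural involution on $R$. Define $\sigma \colon R \to R$ by $\sigma(X_i) = -X_i$ for every $i \in \N$. Since $\sigma$ acts on a monomial of degree $d$ by multiplication by $(-1)^d$, the fixed subring $R^\sigma$ consists exactly of the polynomials whose monomials all have even degree, i.e.\ $R^\sigma = D$. In terms of the decomposition $f = f_p + f_d$, one has $\sigma(f) = f_p - f_d$, and $f \in D$ if and only if $\sigma(f) = f$.

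I would then take $h' := \sigma(h) = h_p - h_d$ as the candidate. The product $hh'$ lies in $D$ because $\sigma(h\sigma(h)) = \sigma(h)\cdot \sigma^2(h) = \sigma(h)\cdot h = hh'$, so $hh'$ is $\sigma$-fixed. Because $\sigma$ is a ring automorphism and $h$ is irreducible in the UFD $R$, $h'$ is also irreducible in $R$. Moreover $h$ and $h'$ are not associates: if $h' = c h$ for some $c \in K^\times$ (the only units of $R$), equating even and odd parts forces $h_p(1-c)=0$ and $h_d(1+c)=0$, and since $h_p,h_d \neq 0$ by hypothesis, this gives the contradictory $c=1$ and $c=-1$.

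For the minimality statement, suppose $g \in R$ satisfies $hg \in D$. Applying $\sigma$ to the equation $hg = \sigma(hg)$ yields
\[
hg = \sigma(h)\sigma(g) = h'\sigma(g)
\]
in $R$. Since $h$ and $h'$ are coprime (as non-associate irreducibles in the UFD $R$), $h$ must divide $\sigma(g)$, and applying $\sigma$ gives $h' = \sigma(h) \mid g$. Hence every $g$ with $hg \in D$ is a multiple of $h'$, so $h'$ is minimal with respect to divisibility and is unique up to a unit of $R$, as required.

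I do not anticipate a real obstacle: once one identifies $D$ as the fixed ring of the sign-flipping involution, existence is a one-line computation and minimality reduces to a coprimality argument in a UFD. The only subtle point to verify carefully is that $h$ and $\sigma(h)$ are genuinely coprime — equivalently, not associates — which is exactly where the hypothesis $h_p,h_d \neq 0$ is used.
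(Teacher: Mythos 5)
Your proof is correct, and it reaches the same element $h' = h_p - h_d$ as the paper, but by a different and somewhat more structural route. The paper expands $hg = h_pg_p + h_dg_d + h_pg_d + h_dg_p$, reads off the condition $h_pg_d = -h_dg_p$ from the vanishing of the odd part, and then uses the (asserted, not fully justified) coprimality of $h_p$ and $h_d$ in the UFD $R$ to deduce $h_p \mid g_p$ and hence $g = e(h_p - h_d)$; the "minimality" clause is left a bit implicit there. You instead identify $D$ as the fixed ring of the sign involution $\sigma(X_i) = -X_i$, take $h' = \sigma(h)$, and reduce everything to the fact that $h$ and $\sigma(h)$ are non-associate irreducibles: the identity $hg = \sigma(h)\sigma(g)$ then forces $\sigma(h) \mid g$ for \emph{every} $g$ with $hg \in D$, which is exactly the divisibility-minimality statement the paper needs later (that $hh'$ divides any element of $D$ divisible by $h$). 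What your approach buys is twofold: you replace the coprimality of $h_p$ and $h_d$ (which requires a small degree/parity argument the paper omits) by the cleaner observation that an automorphism preserves irreducibility together with the check, using $h_p, h_d \neq 0$, that $h$ and $\sigma(h)$ are not associates; and you make the minimality claim fully rigorous rather than just exhibiting the "minimal possible choice." The only background facts you rely on, namely that $R = K[\{X_n\}_{n\in\N}]$ is a UFD with unit group $K^\times$ and that the even-degree polynomials form exactly the subring $D$, are also implicitly used by the paper, so there is no gap.
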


\begin{proof} Assume $hg \in D$ for some $g \in R$. Clearly $g_p,g_d \neq 0$ and $$hg= h_pg_p + h_dg_d + h_pg_d + h_dg_p \in D$$ implying $h_pg_d = - h_dg_p$. Since $h$ is irreducible, $h_p$ and $h_d$ have no common factors in the UFD $R$ and hence necessarily $g_p$ is a multiple of $h_p$. The minimal possible choice for a such $g$ is obtained setting $ g_p:=h_p $ and thus $g_d= -h_d$. We conclude defining $h^{\prime}:= h_p - h_d$. 
\end{proof}

\begin{theorem} 
\label{krulldomain}
The ring $ D=K[\lbrace X_iX_j \rbrace_{i,j \in \N}] $ is a BID.
\end{theorem}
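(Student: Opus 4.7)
My plan is to reduce the computation of $\bigcap_{i=1}^n (a_i)$ inside $D$ to a computation in the ambient UFD $R$, then exploit the $\mathbb{Z}/2$-symmetry of the inclusion $D \subseteq R$. Let $\sigma: R \to R$ be the $K$-algebra involution defined by $\sigma(X_i) = -X_i$; in the paper's notation $\sigma(f) = f_p - f_d$, the fixed subring $R^\sigma$ is exactly $D$, and the $(-1)$-eigenspace is the $K$-vector space $A$ of odd polynomials. A short verification using $D = R \cap Q(D)$ gives
\[
(a_1) \cap \cdots \cap (a_n) \;=\; LR \cap D, \quad \text{where } L = \mathrm{lcm}_R(a_1,\ldots,a_n):
\]
the inclusion $\subseteq$ is immediate, while for $\supseteq$, if $f \in LR \cap D$ then for each $i$ one has $f/a_i \in R$ (since $a_i \mid L$) and $f/a_i \in Q(D)$, hence $f/a_i \in R \cap Q(D) = D$.

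Next I would classify the irreducibles of $R$ by their $\sigma$-behavior: type (I), $\sigma$-fixed, i.e., $h \in D$; type (II), $\sigma(h) = -h$, i.e., $h$ is a purely odd irreducible living in $A$; and type (III), mixed with $h_p, h_d \neq 0$, for which Lemma \ref{krulllemma} identifies $\sigma(h) = h_p - h_d = h'$ as a distinct irreducible. Since the $R$-factorization of every $a \in D$ must be $\sigma$-invariant, type III irreducibles appearing in $a$ occur in $\sigma$-orbits $\{h, h'\}$ with equal multiplicities, while the sum of multiplicities of type II irreducibles in $a$ is forced to be even (so that the sign contributions $\prod(-1)^{e_h}$ collapse to $1$). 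Taking the $R$-lcm preserves the type III pairing (maxima of equal pairs stay equal), but not the type II parity constraint. Therefore $L = L_0 \cdot M$, where $L_0 \in D$ gathers the type I and type III factors and $M$ is a product of type II irreducibles. Since $A \cdot A \subseteq D$ and $A \cdot D \subseteq A$, one of the dichotomy holds: either $L \in D$ (total type II multiplicity in $L$ is even) or $L \in A$ (odd), with no mixed possibility.

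To finish, in Case A ($L \in D$), the condition $\sigma(Lg) = Lg$ reduces to $\sigma(g) = g$, so $LR \cap D = LD$ is principal. In Case B ($\sigma(L) = -L$), the same reduction gives $LR \cap D = L \cdot A$; multiplication by $L$ is an injective $D$-linear map $A \to L \cdot A$, so it suffices to show that $A$ is not finitely generated as a $D$-module. This uses the infinitely many variables: if $A$ were generated over $D$ by $X_{i_1}, \ldots, X_{i_k}$ and we pick $j \notin \{i_1, \ldots, i_k\}$, then writing $X_j = \sum_l X_{i_l} d_l$ with $d_l \in D$ would equate $X_j$ with a sum of monomials each divisible by some $X_{i_l} \neq X_j$, which is impossible. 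The main obstacle I foresee is the bookkeeping in the middle paragraph, namely verifying that the $R$-lcm of $\sigma$-invariant elements can fail $\sigma$-invariance \emph{only} through an odd type II discrepancy and nothing more exotic; once the dichotomy $L \in D$ or $L \in A$ is established, both cases conclude cleanly.
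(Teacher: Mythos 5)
Your proof is correct and follows essentially the same route as the paper's: reduce to the principal ideal $LR$ of the ambient UFD $R$ via $I = LR \cap D$ and $R \cap Q(D) = D$, then establish the dichotomy $L \in D$ (principal case) versus $L \in A$ (non-finitely-generated case). Packaging the even/odd decomposition as the fixed ring of the involution $\sigma(X_i) = -X_i$ is a clean reformulation of Lemma \ref{krulllemma} (whose $h'$ is exactly $\sigma(h)$), and your Case B observation that $I = L\cdot A$ with $A$ not finitely generated as a $D$-module (an argument that works verbatim for arbitrary finite generating sets, not just single variables) makes explicit the final step the paper leaves implicit.
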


\begin{proof} Let $f_1, f_2, \ldots, f_s \in D$ be pairwise incomparable elements and set $$I= (f_1) \cap (f_2) \cap \ldots \cap (f_s).$$ The ideal $$ f_1R \cap f_2R \cap \ldots \cap f_sR = \lambda R $$ is principal generated by $ \lambda = \mbox{lcm}(f_1, f_2, \ldots, f_s) $. This polynomial is expressible in a unique way as $ \lambda= \frac{f}{g} $ where $f=f_1 \cdots f_s$ and $g$ is a product involving the common factors of the pairs $f_i, f_j$. 

If $h$ is a common irreducible factor of $f_i$ and $ f_j$, and $h_p,h_d \neq 0$, by Lemma \ref{krulllemma}, also $ h^{\prime} $ has to be a common factor of $f_i$ and $ f_j$ and therefore $ hh^{\prime} $ divides $g$. It follows that we can express $g= g_1 \cdots g_e h_1 \cdots h_c$ with $g_i \in D$ and $h_i \in A.$ Since $f \in D$, it is easy to observe that either $ \lambda \in D $ or $ \lambda \in A $.

First we consider the case $ \lambda \in D $. Clearly also $ \frac{\lambda}{f_i} \in D $ for every $i$ and hence $\lambda \in I$. Every $\alpha \in I $ can be written as $ \alpha = \lambda \frac{\alpha}{\lambda} $, moreover $I \subseteq \lambda R$ and thus $ \frac{\alpha}{\lambda} \in R \cap Q(D) = D $. It follows that $I= (\lambda)$ is principal.

In the second case, $\lambda, \frac{\lambda}{f_i} \in A$ and $ \frac{\lambda}{f_i} X_n \in D $ for every $i$ and $n$. It follows that $I$ contains the element $\lambda X_n$ for every $n \in \N$ and therefore  cannot be finitely generated.
\end{proof}


\section{The $\xi$ operation}
In this section we study Bezout and Strong Bezout Intersection domains in term of an operation on ideals, closely related to the well-known star operation $w$.
A nonzero finitely generated integral ideal $I$ of an integral domain $D$ is a \it Glaz-Vasconcelos ideal \rm if $I^{-1}=D$ (equivalently if $I_v=D$)\cite{GV}. The set of all Glaz-Vasconcelos ideals of $D$ is denoted by $GV(D)$. Let $K$ be the quotient field of $D$ and let $I\subseteq D$ be an ideal.
The star operation $w$ is defined as $$ I_w = \lbrace x \in K \mbox{ : } xJ \in I, \mbox{ for some } J \in GV(D) \rbrace. $$ We want to define a similar operation on ideals involving the use of trace ideals. An ideal $J \subseteq D$ is a \it trace ideal \rm if $J=II^{-1}$ for some nonzero ideal $I \subseteq D$. Sometimes we will use the notation $\Tr(I)=II^{-1}$. Glaz-Vasconcelos ideals are always trace ideals, since if $J \in GV(D)$, $J=\Tr(J).$


\begin{definition} 
\label{d2} Let $I$ be an ideal of $D$, we define $$ I_{\xi} = \lbrace x \in K \mbox{ : } xJ \subseteq I, \mbox{ for some } J \mbox{ a finitely generated trace ideal} \rbrace. $$
 \end{definition}

\begin{prop} Let $I,I_1$ be ideals of $D$. The operation $\xi$ defined above has the following properties:
\label{operation}
\begin{enumerate}
\item $ I_{\xi} $ is an ideal of $D$.
\item For $x \in D$, $xI_{\xi}= (xI)_{\xi}$.
\item For $I \subseteq I_1$, $ I_{\xi} \subseteq (I_1)_{\xi} $.
\item $I \subseteq I_{\xi}$.
\item $I_w \subseteq I_{\xi}$.
\end{enumerate}
Moreover the following assertions are equivalent: \begin{enumerate}
\item[(i)] Every finitely generated trace ideal is Glaz-Vasconcelos.
\item[(ii)] $\xi=w$.
\item[(iii)] $\xi$ is a star operation.
\end{enumerate}
\end{prop}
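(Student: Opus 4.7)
My plan is to verify the five listed properties directly from the definition and then close the equivalence as a triangle (i)$\Rightarrow$(ii)$\Rightarrow$(iii)$\Rightarrow$(i). A key background fact I will use repeatedly is that $D$ itself is a finitely generated trace ideal (since $D = D \cdot D^{-1}$), so the trivial witness $J = D$ makes $I \subseteq I_\xi$ automatic, establishing (4).

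For (1), I argue that $I_\xi$ is a $D$-submodule of $K$. Closure under $D$-scalar multiplication is immediate: from $xJ \subseteq I$ and $r \in D$ one gets $(rx)J \subseteq rI \subseteq I$. The delicate point is closure under addition: given $xJ_1 \subseteq I$ and $yJ_2 \subseteq I$ with $J_1, J_2$ finitely generated trace ideals, the natural candidate witness for $x+y$ is $J := J_1 J_2$, which is finitely generated and satisfies
\[
(x+y) J_1 J_2 \subseteq xJ_1 \cdot J_2 + yJ_2 \cdot J_1 \subseteq IJ_2 + IJ_1 \subseteq I,
\]
using $J_1, J_2 \subseteq D$. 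The main obstacle I anticipate is verifying that $J_1 J_2$ is itself a trace ideal (or at least contains a finitely generated trace ideal doing the same job); I plan to attack this via the characterization that $J$ is a trace ideal iff $J^{-1} = (J:J)$ is an overring of $D$, and exploit the identity $(J_1 J_2)^{-1} = (J_i^{-1} : J_{3-i})$ together with the fact that each $J_i^{-1}$ is a ring.

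Properties (2), (3), (5) are short direct arguments. For (2): $y \in I_\xi$ with $yJ \subseteq I$ yields $(xy)J \subseteq xI$, so $xI_\xi \subseteq (xI)_\xi$; conversely, if $z \in (xI)_\xi$ with $zJ \subseteq xI$, then in $K$ we have $(z/x)J \subseteq I$, so $z/x \in I_\xi$ and $z \in xI_\xi$. Property (3) is immediate monotonicity. For (5), any $J \in GV(D)$ satisfies $J^{-1} = D$, so $JJ^{-1} = JD = J$, exhibiting $J$ as a finitely generated trace ideal; hence every $J$ admissible in the definition of $I_w$ is admissible in the definition of $I_\xi$.

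For the equivalences, (i)$\Rightarrow$(ii) is immediate because under (i) the two classes of ideals parametrizing the definitions of $I_w$ and $I_\xi$ coincide, and (ii)$\Rightarrow$(iii) uses the classical fact that $w$ is a star operation. The heart of the argument is (iii)$\Rightarrow$(i): any star operation $\ast$ must satisfy $D_\ast = D$ (from the axiom $(a)_\ast = (a)$ applied at $a=1$). Unravelling the definition of $\xi$ at $I = D$ gives $D_\xi = \bigcup_J J^{-1}$ with $J$ ranging over finitely generated trace ideals; since $D \subseteq J^{-1}$ always, the equality $D_\xi = D$ forces $J^{-1} = D$ for every such $J$, which is precisely (i).
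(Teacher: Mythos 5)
Items (2)--(5) and the equivalence cycle (i)$\Rightarrow$(ii)$\Rightarrow$(iii)$\Rightarrow$(i) in your proposal are correct and essentially identical to the paper's argument; in particular your treatment of (iii)$\Rightarrow$(i) via $D_{\xi}=\bigcup_J J^{-1}=D$ forcing $J^{-1}=D$ for every finitely generated trace ideal $J$ is the same observation the paper phrases as a proof by contradiction.

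The genuine gap is exactly at the point you flagged in item (1). Your witness $J_1J_2$ for $x+y$ is finitely generated and does satisfy $(x+y)J_1J_2\subseteq I$, but it need not be a trace ideal, and no manipulation of $(J_1J_2)^{-1}=(J_1^{-1}:J_2)$ can show that it is, because the claim is false. Take $D=k[t^3,t^4,t^5]_{(t^3,t^4,t^5)}$ with $\overline{D}=k[t]_{(t)}$ and $J_1=J_2=\m=(t^3,t^4,t^5)=t^3\overline{D}$: then $\m^{-1}=\overline{D}$ and $\m\m^{-1}=\m$, so $\m$ is a finitely generated trace ideal, whereas $\m^2=t^6\overline{D}$ has $(\m^2)^{-1}=t^{-3}\overline{D}\neq \overline{D}=(\m^2:\m^2)$, so $\m^2$ is not a trace ideal; its trace is $\m\supsetneq \m^2$, which also dashes your fallback hope of finding a suitable trace ideal \emph{inside} $J_1J_2$. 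The paper's way out is to use the trace of the product, not the product, as the witness: writing $J_i=\Tr(I_i)$, it invokes the containment $\Tr(I_1I_2)\subseteq \Tr(I_1)\cap\Tr(I_2)$ (cf.\ \cite[Proposition 1.4]{HHS}; the one-line proof is that $\gamma\in (I_1I_2)^{-1}$ and $b\in I_2$ give $\gamma b\in I_1^{-1}$, hence $I_1I_2(I_1I_2)^{-1}\subseteq I_1I_1^{-1}$), so that $(x+y)\Tr(I_1I_2)\subseteq xJ_1+yJ_2\subseteq I$ with a witness that is a trace ideal by construction; you could run the identical argument with $\Tr(J_1J_2)$, since $J_i=\Tr(J_i)$. (One residual point, left implicit in the paper as well, is that the definition of $\xi$ requires the witness to be finitely generated, and finite generation of $\Tr(I_1I_2)$ is not automatic; any complete repair of item (1) should say a word about this.)
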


\begin{proof}
(1) Take $x,y \in I_{\xi}$. Then there exist two finitely generated trace ideals $J_1= \Tr(I_1)$ and $J_2= \Tr(I_2)$ such that $ xJ_1, yJ_2 \subseteq I $. It is easy to observe that, in general $$ \Tr(I_1I_2) \subseteq \Tr(I_1)\cap \Tr(I_2) $$ (cf. \cite[Proposition 1.4]{HHS}). Hence $$(x+y)\Tr(I_1I_2) \subseteq xJ_1 + yJ_2 \subseteq I$$ implying $x+y \in I_{\xi}$. It is now straightforward to prove that $ I_{\xi} $ is an ideal. \\
(2) and (3) are clear consequences of the definitions. \\
(4) Just observe that $D$ itself is a finitely generated trace ideal. \\
(5) Follows from the fact that Glaz-Vasconcelos ideals are finitely generated trace ideals. \\
We prove now the equivalence of the three conditions (i)-(ii)-(iii). The implications (i) $ \Rightarrow $ (ii) and (ii) $\Rightarrow $ (iii) are trivial. Assume (iii), then $D_{\xi}=D$. By way of contradiction, suppose there exists a finitely generated trace ideal $J\not \in GV(D)$. It follows that there exists $z \in J^{-1} \setminus D$, hence $zJ \subseteq D$ implying $z \in D_{\xi} \setminus D$ and this is a contradiction.
\end{proof}

\begin{remark}
It follows from the above proposition that $\xi$ may fail to be a star operation.   In general, it may also fail  to be a semistar operation. In the ring $D=k[X^2,X^3]_{(X^2,X^3)}$, 
the maximal ideal $\m=\Tr(\m)$ is a finitely generated trace ideal. Since $$X^2\m, X^3\m \subseteq (X^4, X^5),$$ we get $(X^4, X^5)_{\xi} = \m$. But $\m_{\xi}= D[x]$ and therefore $(I_{\xi})_{\xi} \neq I_{\xi}$ for some ideal $I$.
However, setting $I_1=I$ and for $i \geq 2$, $I_n:= (I_{n-1})_{\xi}$, the operation $\star$ defined by $$I^{\star}= \bigcup_{i=1}^{\infty} I_n$$ is a proper semistar operation.
\end{remark}

\begin{remark} \label{kaplansky}
When $D$ is integrally closed, $\xi= w$ is a star operation. This is a consequence of a well-known fact appearing as exercise in Kaplansky's book. Indeed, in an integrally closed domain, $J= II^{-1}$ finitely generated implies $J^{-1}=D$ (cf. \cite[exercise 39, pag.45]{Kap}).
\end{remark}

\medskip




The next result describes some properties of Bezout Intersection domains and relate them with the operation $\xi$.

\begin{theorem} 
\label{3properties}
Let $D$ be a Bezout Intersection domain. Then:
\begin{enumerate}
\item[(a)] $\xi=w$.
\item[(b)] Invertible ideals of $D$ are principal.
\item[(c)] Finitely generated maximal $t$-ideals of $D$ are principal.
\end{enumerate}
 \end{theorem}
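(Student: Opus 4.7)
For part (b), the conclusion coincides exactly with Lemma \ref{inve}, which was already established, so nothing new need be proven. The arguments for (a) and (c) share a common template: use Remark \ref{r1} to rewrite the inverse of a finitely generated ideal as a finite intersection of principal ideals of $D$ (up to scaling), and then apply the BID dichotomy to that intersection.

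For (a), by the equivalence in Proposition \ref{operation}, it suffices to show every finitely generated trace ideal $J$ of $D$ satisfies $J^{-1} = D$. Write $J = (j_1, \ldots, j_n)$ with pairwise incomparable generators. The case $n = 1$ is immediate because $JJ^{-1} = J$ forces $j_1$ to be a unit. For $n \geq 2$, set $j = \prod_i j_i$ so that, by Remark \ref{r1}, $jJ^{-1} = \bigcap_i (j/j_i)D$. The BID dichotomy gives two possibilities. If the intersection is principal, then $J^{-1}$ is a principal fractional ideal; the trace condition $JJ^{-1} = J$ makes $J^{-1} = (J:J)$ into a subring of the quotient field containing $D$, and a principal $D$-submodule of the quotient field that is simultaneously closed under multiplication and contains $D$ must coincide with $D$ (its generator lies in $D$ and is a unit), so $J^{-1} = D$. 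The main obstacle lies in the other case, where $jJ^{-1}$ is not finitely generated: the plan is to exploit the trace property to show that every $y \in J^{-1}$ is integral over $D$ (via Cayley--Hamilton applied to the $D$-linear endomorphism of $J$ given by multiplication by $y$) and that $J^{-1}$ lies inside the finitely generated $D$-module $(1/j_1)J$, and then to combine these restrictions to force $J^{-1}$ to be a finitely generated $D$-module after all, contradicting the non-finite-generation of $jJ^{-1}$.

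For (c), let $P = (p_1, \ldots, p_n)$ be a finitely generated maximal $t$-ideal, so $P = P_t = P_v$. With $p = \prod_i p_i$, apply Remark \ref{r1} and the BID dichotomy to $pP^{-1} = \bigcap_i (p/p_i)D$. If the intersection is principal, then $P^{-1}$ is a principal fractional ideal and $P = (P^{-1})^{-1}$ is principal as well. If instead the intersection is not finitely generated, consider the trace ideal $PP^{-1}$; the standard dichotomy for maximal $t$-ideals---obtained by noting that $(PP^{-1})_t$ is a $t$-ideal containing $P$ and hence by maximality equals $P$ or $D$---yields either $PP^{-1} = D$ or $PP^{-1} = P$. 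In the first sub-case $P$ is invertible and thus principal by (b). In the second, $P$ is a finitely generated trace ideal, so (a) gives $P^{-1} = D$, whence $P = P_v = D$, contradicting the properness of $P$.
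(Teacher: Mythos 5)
Part (b) is exactly Lemma \ref{inve}, and your overall strategy of reducing (a) and (c) to the BID dichotomy via Remark \ref{r1} is the right instinct; but in both (a) and (c) the branch where the intersection is \emph{not} finitely generated is left with a genuine gap, and in each case the paper's own proof is organized precisely so as never to enter that branch. In (a), after the correct treatment of the principal case, you only describe a ``plan'' for the non-finitely-generated case, and the plan fails: the containment $J^{-1}=(J:J)\subseteq j_1^{-1}J$ puts $J^{-1}$ inside a finitely generated $D$-module, but $D$ is not assumed Noetherian, so a submodule of a finitely generated module need not be finitely generated, and knowing that each $y\in J^{-1}$ is integral over $D$ (elementwise integrality) adds no module-finiteness. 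So this case is not ruled out. The paper instead starts from $J=II^{-1}$, notes that $I^{-1}$ is finitely generated and, by Remark \ref{r1}, a scaled finite intersection of principal ideals, hence \emph{principal} (not merely ``principal or non-finitely-generated'') by the BID property; then $J=aI$ and $J^{-1}=a^{-1}I^{-1}=D$, with no second branch to dispose of.

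In (c), the principal branch and the sub-case $PP^{-1}=P$ are fine (the latter correctly feeds into (a)). The gap is the jump from $(PP^{-1})_t=D$ to $PP^{-1}=D$: $t$-invertibility of a finitely generated ideal does not imply invertibility (a finitely generated height-one prime of a non-factorial Krull domain is $t$-invertible but not invertible), so you cannot invoke (b) there. The sub-case is repairable --- from $(PP^{-1})_t=D$ and $P$ finitely generated one gets $P=P_v=B^{-1}$ for a finitely generated fractional ideal $B\subseteq P^{-1}$, and then Remark \ref{r1}, the BID property and the finite generation of $P$ force $P$ to be principal --- but as written the step is unjustified. Note that the paper's proof of (c) avoids the trace dichotomy entirely: a finitely generated maximal $t$-ideal satisfies $P=P_v$, hence $P\subseteq D\cap xD$ for some $x$ with $1\notin xD$, hence $P=D\cap xD$ by $t$-maximality, and this finitely generated intersection of two principal (fractional) ideals must be principal in a BID. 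That route is both shorter and free of the invertibility issue.
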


\begin{proof}
(a) Let $J=II^{-1}$ be a finitely generated trace ideal. It follows that $I$ and $I^{-1}$ are both finitely generated. By Remark \ref{r1}, $I^{-1}$ is principal. Hence $J=aI$ and $$J^{-1}=a^{-1}I^{-1}=a^{-1}aD=D.$$ This proves $J \in GV(D)$ and Proposition \ref{operation} concludes the proof. \\
(b) This is proved in Lemma \ref{inve}. \\ 
(c) A finitely generated maximal $t$-ideal $P$ is divisorial. Hence $P= D \cap \bigcap_{l\in L}x_lD$. Take $x \in \lbrace x_l\rbrace_{l \in L}$. It follows that $ P \subseteq D \cap xD$, which  is a proper $t$-ideal. Hence $P=D \cap xD$ is the intersection of two principal ideals and it has to be principal since $D$ is BID.
\end{proof}


We do not know in general, whether the converse of Theorem \ref{3properties} is true, that is, whether any integral domain satisfying conditions (a),(b),(c) is a BID. We leave this as a question:

\begin{question} 
Do conditions (a),(b),(c) of Theorem \ref{3properties} imply that $D$ is BID? 
\end{question}

In some cases the answer is yes.
For a Noetherian domain $D$, condition (c) of Theorem \ref{3properties} it is sufficient to imply that $D$ is a UFD and thus a BID.
It is possible to prove that the converse of Theorem \ref{3properties} holds also for integral domains such that the star operation $w$ is the identity. These domains are called \it DW domains \rm and they have been widely studied in articles such as \cite{mimouni} and \cite{PT}. In a DW domain $D$, $J \in GV(D)$ implies $J_w=D$, thus the only Glaz-Vasconcelos ideal is $D$ itself and every finitely generated proper ideal is contained in a divisorial ideal. For this reason, DW domains are exactly the integral domains in which every maximal ideal is a $t$-ideal. 

Before proving this result, we briefly discuss the fact that conditions (a),(b),(c) are not related one to each other. Indeed for any two of them, there are integral domains satisfying both but not satisfying the third one. Indeed any local Noetherian integrally closed domain that is not a UFD satisfies (a),(b) but not (c) (consider for instance $k[X,\frac{Y}{X},\frac{Y^3}{X^4}]_{(X,\frac{Y}{X},\frac{Y^3}{X^4})}$).

Any non-Bezout Pr\"{u}fer domain having all the maximal ideals non finitely generated satisfies (a),(c) but not (b) (for instance the classical integer-valued polynomials ring Int$(\mathbb{Z})$).

Finally, as example of integral domain satisfying (b),(c) but not (a), we may take the ring $$ D=k[y, \sqrt[3]{y^2}, \sqrt[9]{y^2}, \sqrt[27]{y^2}, \ldots ]_{\mathfrak{M}} $$ where $\mathfrak{M}$ is the ideal generated by $y$ and by $ \lbrace\sqrt[3^n]{y^2} \rbrace_{n \geq 1} $. This ring was constructed by Hochster, but using a different notation, in order to find a one dimensional local non-Noetherian coherent domain 
(see \cite[Section 7, pag. 278]{glaz2}). All the ideals of the form $ (y, \sqrt[3]{y^2}, \ldots, \sqrt[3^n]{y^2}) $ are finitely generated trace ideals not Glaz-Vasconcelos.  

We prove now that conditions (b) and (c) are sufficient to force a DW domain to be a BID. To introduce the next result, we recall that the identity star operation is usually called $d$.

\begin{theorem} 
\label{dw} Let $D$ be a DW domain. The following conditions are equivalent:
\begin{enumerate}
\item $D$ is BID.
\item The invertible ideals of $D$ are principal and $\xi=w$.
\end{enumerate}
\end{theorem}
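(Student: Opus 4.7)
The forward implication $(1) \Rightarrow (2)$ is immediate from Theorem \ref{3properties}: parts (a) and (b) of that theorem together constitute condition (2) here, so no further argument is required.

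For $(2) \Rightarrow (1)$, my plan is to reduce the principality of an arbitrary finitely generated intersection $I = (a_1) \cap \cdots \cap (a_n)$ of principal ideals (with the $a_i$ pairwise incomparable, $n \geq 2$) to the invertibility of a single associated finitely generated ideal, then apply the two hypotheses in turn. Set $a = a_1 \cdots a_n$, $b_i = a/a_i \in D$, and $J = (b_1, \ldots, b_n) \subseteq D$. A direct check --- equivalently, Remark \ref{r1} applied to the ideal $J$ --- gives the duality
$$
I = a\,J^{-1},
$$
since $x \in J^{-1}$ exactly when $x b_i \in D$ for every $i$, which is equivalent to $xa \in (a_i)$ for every $i$, i.e., to $xa \in I$. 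In particular, $I$ is finitely generated if and only if $J^{-1}$ is.

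Assume now that $I$ is finitely generated. Then $J$ and $J^{-1}$ are both finitely generated fractional ideals, so $JJ^{-1} = \sum_i b_i J^{-1}$ is a finitely generated trace ideal of $D$. The hypothesis $\xi = w$, via Proposition \ref{operation}, forces $JJ^{-1} \in GV(D)$; the DW hypothesis (as recalled just before the statement of the theorem) collapses $GV(D)$ to $\{D\}$. Combining the two gives $JJ^{-1} = D$, i.e., $J$ is invertible. The remaining hypothesis that invertible ideals are principal then yields $J = (c)$ for some $c \in D$, whence $J^{-1} = c^{-1}D$ and $I = a J^{-1} = (a/c)$, which is principal, as desired.

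I do not foresee a serious obstacle: once the correspondence $I \leftrightarrow J^{-1}$ is in place, the proof is essentially a chain of implications through machinery already developed, namely Proposition \ref{operation} and the collapse $GV(D) = \{D\}$ in a DW domain. The only technical point I will want to state explicitly is that $JJ^{-1}$ genuinely is a finitely generated \emph{integral} ideal of $D$ under the assumption that $J^{-1}$ is finitely generated, but this is immediate from the description $JJ^{-1} = \sum_i b_i J^{-1} \subseteq D$.
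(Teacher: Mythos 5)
Your proposal is correct and follows essentially the same route as the paper: the forward direction is Theorem \ref{3properties}(a),(b), and the reverse direction uses $\xi=w$ together with the DW hypothesis to conclude that every finitely generated trace ideal $JJ^{-1}$ equals $D$, hence $J$ is invertible, hence principal, and one closes via the duality of Remark \ref{r1}. The only cosmetic difference is that the paper phrases the argument directly in terms of an arbitrary finitely generated ideal $J$ with finitely generated inverse, while you unwind the correspondence $I = aJ^{-1}$ explicitly; the content is identical.
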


\begin{proof} The first implication follows by Theorem \ref{3properties}. For the second, observe that in a DW domain such that $\xi=w$, then $\xi=d$ and therefore the only finitely generated trace ideal of $D$ is $D$. Let $J$ be a finitely generated ideal and assume $J^{-1}$ to be also finitely generated. It follows that $JJ^{-1}$ is finitely generated and hence $JJ^{-1}= D$ and $J$ is invertible. By assumption $J$ and $J^{-1}$ are principal and $D$ is a BID as a consequence of Remark \ref{r1}.
\end{proof}

Local DW domains are called \it $t$-local domains \rm and they are the local integral domains such that the maximal ideal is a $t$-ideal. Properties of $t$-local domains and their connections and differences with valuation domains are surveyed in \cite{FZ}. We can now give a useful characterization of the SBID. 

\begin{theorem} 
\label{carattsbid} Let $D$ be an integral domain. The following conditions are equivalent:
\begin{enumerate}
\item $D$ is SBID.
\item $D$ is $t$-local and BID.
\item $D$ is $t$-local and $\xi=w$.
\item $D$ is local and $\xi=d$.
\end{enumerate}
\end{theorem}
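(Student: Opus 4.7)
The plan is to prove the cycle $(1) \Rightarrow (2) \Rightarrow (3) \Rightarrow (4) \Rightarrow (1)$. Two of these are essentially automatic: $(2) \Rightarrow (3)$ is Theorem \ref{3properties}(a), and $(3) \Rightarrow (4)$ follows because a $t$-local domain is a local DW domain, so $w=d$, and together with $\xi=w$ this gives $\xi=d$.

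For $(1) \Rightarrow (2)$, I would use that SBID is local (the paper cites \cite[Proposition 2.5]{Gue}) and that SBID trivially implies BID. The substantive content is that the maximal ideal $\m$ is a $t$-ideal. Suppose not; then there is a finitely generated $J \subseteq \m$ with $J^{-1}=D$. After discarding redundant generators, write $J=(a_1,\ldots,a_n)$ with the $a_i$ pairwise incomparable; necessarily $n \geq 2$, since a principal $J$ with $J^{-1}=D$ would force $J=D$. By Remark \ref{r1}, $J^{-1}=D$ rewrites as $\bigcap_{i=1}^n (a/a_i) = aD$ with $a=\prod a_j$, and the $a/a_i$ are pairwise incomparable because the $a_i$ are. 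This exhibits a finite intersection of $n \geq 2$ pairwise incomparable principal ideals that is principal, contradicting SBID.

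The heart of the theorem is $(4) \Rightarrow (1)$. Since Glaz--Vasconcelos ideals are finitely generated trace ideals, Proposition \ref{operation}(5) gives $I_w \subseteq I_\xi$ for every $I$, so $\xi=d$ forces $w=d$; together with the local hypothesis this makes $D$ a local DW domain, hence $t$-local. Moreover $\xi=d$ is a star operation, so by Proposition \ref{operation} every finitely generated trace ideal is Glaz--Vasconcelos, and in a DW domain any $J \in GV(D)$ satisfies $J_w=J$ while $1 \in J_w$ forces $J_w=D$; hence the only finitely generated trace ideal of $D$ is $D$ itself. To verify SBID, let $a_1,\ldots,a_n$ be pairwise incomparable with $n \geq 2$, set $a=a_1 \cdots a_n$ and $J=(a/a_1,\ldots,a/a_n) \subseteq D$, and suppose for contradiction that $I=\bigcap_i (a_i)$ is finitely generated. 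By Remark \ref{r1}, $J^{-1}=a^{-1}I$ is finitely generated, so $JJ^{-1}$ is a finitely generated trace ideal and thus equals $D$. Then $J$ is invertible, and since $D$ is local, principal; by Nakayama one of the listed generators $a/a_k$ generates $J$, forcing $a_k \in (a_j)$ for every $j$, which contradicts pairwise incomparability for any $j \neq k$. The main obstacle in this implication is identifying the right finitely generated ideal $J$ whose inverse captures the intersection $I$, so that the ``trivial trace ideal'' hypothesis can be applied.
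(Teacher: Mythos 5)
Your proposal is correct and follows essentially the same route as the paper: $(1)\Rightarrow(2)$ via Remark \ref{r1} applied to a finitely generated $J\subseteq\m$ with $J_v=D$, $(2)\Rightarrow(3)$ by Theorem \ref{3properties}, $(3)\Leftrightarrow(4)$ from the DW/$t$-local dictionary, and $(4)\Rightarrow(1)$ by showing the only finitely generated trace ideal is $D$ and applying the inverse-ideal translation. The only cosmetic difference is that you inline the argument of Theorem \ref{dw} (together with the Nakayama step showing a principal $J$ must be generated by one of the $a/a_k$) where the paper simply cites that theorem.
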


\begin{proof} 
(1)$ \Rightarrow $(2) We only need to prove that a SBID is $t$-local. Let $\m$ be the maximal ideal of $D$ and let $J=(a_1, a_2, \ldots, a_n) \subseteq \m$ be a finitely generated ideal (the elements $ a_1, a_2, \ldots, a_n $ are pairwise incomparable). Assume by way of contradiction $J_v= D$, thus $n \geq 2$ and $J^{-1}=D$. As a consequence of Remark \ref{r1}, the intersection $(a_1)\cap (a_2) \cap \ldots \cap (a_n)$ is principal. This contradicts the assumption that $D$ is SBID. \\
(2)$ \Rightarrow $(3) Follows by Theorem \ref{3properties}. \\
(3) $ \Leftrightarrow $ (4) is due to the fact that $t$-local domains are the local domains in which $w=d$, and to item 5 of Proposition \ref{operation}. \\
(4)$ \Rightarrow $(1) We apply Theorem \ref{dw} using the fact that, in a local domain invertible ideals are principal. Hence $D$ is a BID, but since $\xi=d$, the inverse of a finitely generated non-principal ideal $I$ cannot be finitely generated otherwise $II^{-1}$ would be a proper finitely generated trace ideal. This makes $D$ a SBID.
\end{proof}

As a comment to this theorem we remark that the ring $$ D=k[y, \sqrt[3]{y^2}, \sqrt[9]{y^2}, \sqrt[27]{y^2}, \ldots ]_{\mathfrak{M}}, $$ mentioned above, shows that the condition $\xi=w$ is necessary even for a one-dimensional local domain with non-finitely generated maximal ideal in order to be a SBID. 

However, without using the assumption $\xi=w$, it is possible to prove that if the maximal $t$-ideals of an integral domain $D$ are locally cyclic and the invertible ideals of $D$ are principal, then $D$ has to be a BID. We need a preliminary lemma extending to Bezout Intersection domains a well-known statement true for UFDs and GCDs, and that makes use of the fact that finitely generated locally principal ideals of an integral domain are invertible.


\begin{lemma} 
\label{locally}
Let $D$ be an integral domain such that $D_M$ is BID for every maximal ideal of $D$ and every invertible ideal of $D$ is principal. Then $D$ is a Bezout Intersection domain.
\end{lemma}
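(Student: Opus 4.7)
The plan is to show that any finitely generated intersection $I=(a_1)\cap\cdots\cap(a_n)$ of principal ideals with pairwise incomparable $a_i\in D$ must be principal, by proving $I$ is locally principal and invoking the hypothesis that invertible ideals of $D$ are principal.

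First I would take $I=(a_1)\cap(a_2)\cap\cdots\cap(a_n)$ finitely generated with the $a_i$ pairwise incomparable in $D$, the goal being to show $I$ is principal (if $I$ is not finitely generated there is nothing to prove). The key general fact I would use is that localization commutes with finite intersections of submodules, so for every maximal ideal $M\subseteq D$ one has
\[
ID_M = a_1D_M\cap a_2D_M\cap\cdots\cap a_nD_M.
\]
Moreover $ID_M$ is finitely generated because $I$ is.

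Next I would analyze the intersection on the right inside $D_M$. Although the $a_i$ are pairwise incomparable in $D$, some may become comparable (or associate) after passing to $D_M$. I would remove redundant elements: if $a_i\in a_jD_M$ then $a_iD_M\cap a_jD_M=a_iD_M$, so one may discard $a_j$. After this reduction, either only one principal ideal remains, in which case $ID_M$ is already principal, or there is a sub-collection of at least two pairwise incomparable elements (in $D_M$) whose intersection equals $ID_M$. In that second case, the BID hypothesis on $D_M$ tells us this intersection is either principal or not finitely generated; since $ID_M$ is finitely generated, it must be principal. In every case $ID_M$ is principal.

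At this point $I$ is finitely generated and locally principal at every maximal ideal of $D$, which is the standard characterization of an invertible ideal. By the assumption that every invertible ideal of $D$ is principal, $I$ itself is principal, and $D$ is therefore a BID. The only mildly delicate point is step two, namely justifying that when the reduced intersection in $D_M$ has length $\geq 2$ we may legitimately apply the BID property of $D_M$ to the elements that survive; this is immediate once one verifies that the surviving $a_i$ are pairwise incomparable in $D_M$, which follows by construction of the reduction.
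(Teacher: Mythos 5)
Your proof is correct and follows essentially the same route as the paper: localize, use the BID hypothesis plus finite generation to get $ID_M$ principal, conclude $I$ is invertible and hence principal. Your reduction step handling elements that become comparable (or units) in $D_M$ is a point the paper's own proof passes over silently, so your write-up is if anything slightly more careful.
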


\begin{proof}
Let $a_1, a_2, \ldots, a_n \in D$ and set $I= a_1D \cap a_2D \cap \ldots \cap a_nD$. Assume that $I$ is finitely generated and let $M$ be a maximal ideal of $D$. We have that $ID_{M}= a_1D_M \cap a_2D_M \cap \ldots \cap a_nD_M$ is also finitely generated. Hence $ID_M$ is principal since $D_M$ is a BID and therefore $I$ is invertible because it is finitely generated and locally principal. By assumption $I$ is principal and hence $D$ is a BID.
\end{proof}

\begin{theorem} 
\label{locallycyclic} Let $D$ be an integral domain and assume that every maximal $t$-ideal of $D$ is locally cyclic and every invertible ideal of $D$ is principal. Then $D$ is a Bezout Intersection domain. 
\end{theorem}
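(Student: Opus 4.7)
The plan is to apply Lemma~\ref{locally}: it suffices to show that $D_M$ is a Bezout Intersection domain for every maximal ideal $M$ of $D$, since invertible ideals of $D$ are principal by hypothesis.

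I would first treat the case where $M$ is a maximal $t$-ideal. The locally cyclic property of $M$ descends to $MD_M$: any finite subset of $MD_M$ can be scaled by units of $D_M$ to lie in $M$ and is then enclosed by a principal subideal of $M$ by local cyclicity, which remains principal after localization. Thus $D_M$ is a local domain whose maximal ideal is locally cyclic. For an intersection $J = \bigcap_{i=1}^{n}(b_i D_M)$ with $b_i$ pairwise incomparable in $D_M$, local cyclicity yields $y \in MD_M$ with $b_i = yc_i$; the pairwise incomparability of the $b_i$ forces each $c_i$ into $MD_M$, since a unit $c_i$ would make the corresponding $b_i$ an associate of $y$ and hence divide all the others. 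Iterating produces a descent $J = y\cdot y^{(1)}\cdots y^{(s)} \cdot J^{(s)}$, where each $J^{(s)}$ is a finitely generated intersection of pairwise incomparable principal ideals of $D_M$.

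For a maximal ideal $M$ that is not a $t$-ideal, the same pattern applies inside $D_M$: any proper finitely generated $t$-ideal of $D_M$ is contained in a maximal $t$-ideal $\mathfrak{n}$ of $D_M$, which by the standard correspondence for $t$-primes arises as $PD_M$ for $P$ a maximal $t$-ideal of $D$ contained in $M$. The locally cyclic hypothesis on $P$ transfers to $\mathfrak{n}$, so the descent argument of the first case applies verbatim inside $D_M$.

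The main obstacle is showing termination of the descent $J^{(0)} \subseteq J^{(1)} \subseteq \cdots$. Nakayama's Lemma forbids stagnation, since $J^{(s)} = y^{(s+1)} J^{(s+1)}$ combined with $J^{(s)} = J^{(s+1)}$ would force $J^{(s+1)} = 0$; thus a non-terminating descent would produce an infinite strictly ascending chain of finitely generated proper ideals in $D_M$. Ruling this out requires leveraging the finite generation of $J$ together with the hypothesis that invertible ideals of $D$ are principal: the ascending union of the $J^{(s)}$ is locally cyclic and contained in the finitely generated $J$ (up to the accumulating principal factor), so that at some stage $J^{(s)}$ must meet $D_M$, forcing $J^{(s)} = D_M$ and hence $J$ principal.
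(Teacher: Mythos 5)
Your reduction to the local case via Lemma~\ref{locally} matches the paper's first step, but the local argument has a genuine gap, and you have in fact located it yourself: the descent does not terminate, and the proposed fix does not work. Writing $b_i = y c_i$ and $J = yJ^{(1)}$ makes no progress, because $J^{(1)}$ is again a finite intersection of pairwise incomparable principal ideals, and it is principal (resp.\ finitely generated) exactly when $J$ is. An infinite strictly ascending chain of finitely generated ideals is not a contradiction in a non-Noetherian local domain; nothing forces the union $\bigcup_s J^{(s)}$ to be locally cyclic (that all such intersections are locally cyclic is precisely the Pre-Schreier property, which is not assumed); and nothing forces some $J^{(s)}$ to contain a unit. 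Moreover the conclusion you are steering toward --- that $J$ is principal --- is not the right one: the theorem only asserts the dichotomy ``principal or not finitely generated,'' and the second alternative genuinely occurs, so any correct argument must produce non-finite-generation rather than rule it out.

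The paper's local argument runs the induction over the \emph{elements} of the intersection rather than over its generators. For $I=(a_1)\cap\cdots\cap(a_n)$ and each $\alpha\in I$, set $J_\alpha=\left(\frac{\alpha}{a_1},\dots,\frac{\alpha}{a_n}\right)$ and note $J_\alpha^{-1}=\alpha^{-1}I$. If $(J_\alpha)_v=D$ for some $\alpha$, then $I=(\alpha)$ is principal. Otherwise every $J_\alpha$ lies in a maximal $t$-ideal, hence by local cyclicity in a proper principal ideal $(x)$, which gives $\frac{\alpha}{x}\in I$ and so $\alpha\in xI\subseteq \m I$. Thus $I=\m I$, and Nakayama's Lemma shows $I$ is not finitely generated. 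This single dichotomy also repairs the other soft spot in your sketch: when the maximal ideal of $D_M$ is not a $t$-ideal, your factorization $b_i=yc_i$ is unavailable whenever the relevant finitely generated ideal has trivial $v$-closure, and that is exactly the case absorbed by the first branch above. I would replace the descent on the $a_i$ by this $I=\m I$ argument.
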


\begin{proof} A locally cyclic ideal can be expressed as a directed union of principal ideals. Let $P= \bigcup_i x_iD$ be a maximal $t$-ideal and let $M$ be a maximal ideal of $D$ containing $P$. Thus $PD_M= \bigcup_i x_iD_M$ is a locally cyclic (possibly principal) ideal and hence a $t$-ideal.
If $ID_M$ is a $t$-ideal of $D_M$, by \cite[Lemma 3.17]{kang}, $I$ is a $t$-ideal of $D$, and therefore every maximal $t$-ideal of $D_M$ is locally cyclic. In light of the result of Lemma \ref{locally}, we can reduce to assuming that $D$ is local with maximal ideal $\m$.
Let 
$I = (a_1)\cap (a_2) \cap \ldots \cap (a_n)$ for some pairwise incomparable elements $a_1, a_2, \ldots, a_n$. 
Let $ \alpha  \in I $ and set $$J= \left( \frac{\alpha}{a_1}, \frac{\alpha}{a_2}, \ldots, \frac{\alpha}{a_n} \right).$$ If $J_v=D$, we have $$D= J^{-1}= \alpha^{-1}I$$ and this implies $I$ is principal. Otherwise $J$ is contained in some maximal $t$-ideal and therefore $J \subseteq (x) \subseteq \m$, implying $\frac{\alpha}{x} \in (a_i)$ for every $i$. It follows that, if $I$ is not principal, for every $ \alpha \in I $, there exists $ x \in \m $ such that $ \frac{\alpha}{x} \in I. $ Hence $I = \m I$ is not finitely generated by Nakayama's Lemma and $D$ is a BID.
\end{proof}

\begin{corollary} \label{corsbid}
A local domain $D$ with locally cyclic maximal ideal $\m$ is a Strong Bezout Intersection domain.
\end{corollary}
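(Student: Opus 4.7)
The plan is to derive this corollary as a direct combination of Theorem \ref{locallycyclic} and the implication (2)$\Rightarrow$(1) of Theorem \ref{carattsbid}. I would first verify the hypotheses of Theorem \ref{locallycyclic}, deducing that $D$ is BID, and then observe that $D$ is in fact $t$-local, which by Theorem \ref{carattsbid} promotes BID to SBID.

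For the hypotheses of Theorem \ref{locallycyclic}: that every invertible ideal of $D$ is principal is automatic since $D$ is local. The nontrivial input is that every maximal $t$-ideal of $D$ is locally cyclic, and I would obtain this by showing $\m$ itself is a $t$-ideal. For any finitely generated subideal $J=(b_1,\ldots,b_n)\subseteq\m$, local cyclicity of $\m$ provides $y\in\m$ with $J\subseteq (y)$, whence $J_v\subseteq (y)_v=(y)\subseteq\m$. Taking the union over all such $J$ gives $\m_t\subseteq\m$, so $\m$ is a $t$-ideal and hence the unique maximal $t$-ideal of $D$. By hypothesis it is locally cyclic, so Theorem \ref{locallycyclic} applies and yields that $D$ is BID.

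Since $\m$ is both the unique maximal ideal and a $t$-ideal, $D$ is $t$-local by definition. Theorem \ref{carattsbid}, equivalence (1)$\Leftrightarrow$(2), then upgrades ``$t$-local and BID'' to SBID, completing the proof.

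Essentially no obstacle arises here: once one spots that local cyclicity of an ideal forces it to be a $t$-ideal---which is immediate from the definitions of the $v$- and $t$-operations applied to principal-contained finitely generated subideals---everything else is bookkeeping against the two structural results already proved in this section.
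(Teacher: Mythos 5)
Your proof is correct and follows exactly the paper's own route: invertible ideals are principal because $D$ is local, $\m$ is a $t$-ideal because it is locally cyclic, and then Theorem \ref{locallycyclic} and Theorem \ref{carattsbid} combine to give SBID. Your explicit verification that a locally cyclic ideal is a $t$-ideal is a welcome elaboration of a step the paper merely asserts.
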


\begin{proof} The invertible ideals of a local domain are principal and a locally cyclic ideal is a $t$-ideal. By Theorem \ref{locallycyclic}, $D$ is a BID. Hence by Theorem \ref{carattsbid} is a SBID.
\end{proof}

In \cite{PT}, Park and Tartarone define the class of \it GCD-Bezout domains \rm as the integral domains in which the existence of the gcd of a finite set of elements is equivalent to the existence of the Bezout identity for those elements. They show that $D$ is a GCD-Bezout domain if and only if there do not exist proper primitive ideals in $D$. We recall that an ideal is \it primitive \rm if it is not contained in any proper principal ideal \cite{arnold}. Hence, GCD-Bezout domains are exactly the integral domains such that every maximal ideal is locally cyclic. As an application of Theorem \ref{locallycyclic}, we get the following corollary.

\begin{corollary} 
\label{gcdbezout}
A GCD-Bezout domain is BID if and only if every invertible ideal is principal.
\end{corollary}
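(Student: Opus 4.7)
The plan is to prove the two directions separately, both reductions to tools already assembled in the section.

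For the forward direction I would just appeal to Lemma \ref{inve} (equivalently, part (b) of Theorem \ref{3properties}): any BID has only principal invertible ideals, regardless of whether it is GCD-Bezout.

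For the backward direction, assume every invertible ideal of $D$ is principal. The strategy is to localize at each maximal ideal, check that $D_M$ is a SBID, and then glue the pieces back together using Lemma \ref{locally}. Concretely: by the Park--Tartarone characterization recalled just before the corollary, being GCD-Bezout means every maximal ideal $M$ of $D$ is locally cyclic, i.e.\ a directed union $M=\bigcup_i x_iD$ of principal ideals. Extending scalars, $MD_M=\bigcup_i x_iD_M$ is an ascending union of principal ideals of $D_M$, hence the maximal ideal of the local ring $D_M$ is locally cyclic. By Corollary \ref{corsbid}, $D_M$ is a SBID, in particular a BID. Since this holds for every maximal $M$, and invertible ideals of $D$ are principal by hypothesis, Lemma \ref{locally} yields that $D$ is a BID.

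The only step that requires a moment's care is the stability of locally cyclic maximal ideals under localization at themselves; this amounts to the observation that a directed union of principal ideals remains a directed union of principal ideals after tensoring with $D_M$, since $x_iD\subseteq x_jD$ implies $x_iD_M\subseteq x_jD_M$. Once that is in hand, the rest is a bookkeeping assembly of Corollary \ref{corsbid} and Lemma \ref{locally}, so I do not foresee a genuine obstacle.
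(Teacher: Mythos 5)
Your proof is correct and follows essentially the same route as the paper: the paper simply invokes Theorem \ref{locallycyclic} (whose own proof reduces to the local case via Lemma \ref{locally} and handles that case by the argument underlying Corollary \ref{corsbid}), so your decomposition into Lemma \ref{inve}, Corollary \ref{corsbid}, and Lemma \ref{locally} is just an explicit unwinding of the same machinery. Your observation that locally cyclic maximal ideals stay locally cyclic after localizing at themselves is exactly the step the paper carries out at the start of the proof of Theorem \ref{locallycyclic}, so there is no gap.
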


As a consequence of Corollary \ref{corsbid} and Theorem \ref{carattsbid}, we can describe different classes of integral domains between $t$-local domains and valuation domains. Indeed, the following (non-reversible) implications hold for an integral domain: 
$$ \mbox{Valuation}\Rightarrow  \mbox{ Local with locally cyclic maximal ideal } \Rightarrow \mbox{ SBID } \Rightarrow t-\mbox{local}.$$
Using Theorem \ref{locallycyclic}, it is possible to prove that the last three conditions are equivalent for a Pre-Schreier domain.
Hence, this gives a characterization of the Pre-Schreier Strong Bezout Intersection domains.

\begin{corollary} 
\label{preschreier} Let $D$ be a Pre-Schreier local domain with maximal ideal $\m$. The following assertions are equivalent:
\begin{enumerate}
\item $\m$ is locally cyclic.
\item $D$ is a SBID.
\item $D$ is $t$-local.
\end{enumerate}
\end{corollary}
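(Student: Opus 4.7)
The plan is: $(1) \Rightarrow (2)$ is immediate from Corollary \ref{corsbid}, and $(2) \Rightarrow (3)$ is built into Theorem \ref{carattsbid}. All the content is in $(3) \Rightarrow (1)$: if $D$ is Pre-Schreier, local, and $t$-local, then $\m$ is locally cyclic. To prove this I would fix $a_1, \ldots, a_n \in \m$ (all nonzero, without loss of generality) and produce a non-unit $y \in D$ with $a_j \in (y)$ for every $j$.

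Set $J = (a_1, \ldots, a_n)$ and $a = \prod_j a_j$. By Remark \ref{r1},
\[
J^{-1} = \frac{1}{a}\bigl((a/a_1) \cap \cdots \cap (a/a_n)\bigr).
\]
The Pre-Schreier hypothesis, together with Zafrullah's characterization recalled just after Definition \ref{d1}, makes the intersection on the right locally cyclic, so $J^{-1}$ itself is the union of an ascending chain of principal fractional ideals
\[
J^{-1} = \bigcup_i x_i D, \qquad x_1 D \subseteq x_2 D \subseteq \cdots \subseteq K.
\]

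Next I would bring in $t$-locality: since $J \subseteq \m$ and $\m$ is a $t$-ideal, $J_v \subseteq \m \ne D$, and therefore $J^{-1} \ne D$. Because $J$ is integral we have $D \subseteq J^{-1}$, so $1 \in x_{i_0} D$ for some $i_0$, whence $x_i^{-1} \in D$ for every $i \ge i_0$; and because $J^{-1} \ne D$ there is some $i_1$ with $x_{i_1} \notin D$, whence $x_i \notin D$ for every $i \ge i_1$ by the chain condition. For $i \ge \max(i_0, i_1)$ the element $y := x_i^{-1}$ lies in $D$ but is not a unit (otherwise $x_i \in D$), so $y \in \m$; and $x_i \in J^{-1}$ gives $x_i a_j \in D$, whence $a_j = y \cdot (x_i a_j) \in (y)$ for every $j$. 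Thus $(a_1, \ldots, a_n) \subseteq (y) \subseteq \m$, proving $\m$ is locally cyclic.

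The main obstacle is recognizing that Remark \ref{r1} converts the Pre-Schreier hypothesis on intersections of principal ideals into the statement that $J^{-1}$ is a directed union of principal fractional ideals, and then leveraging $t$-locality to force that chain strictly beyond $D$. Once those two ingredients are in place, the common divisor $y \in \m$ of $a_1, \ldots, a_n$ falls out directly.
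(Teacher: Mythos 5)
Your proposal is correct and follows essentially the same route as the paper: both use Zafrullah's characterization to make $aJ^{-1}=(a/a_1)\cap\cdots\cap(a/a_n)$ locally cyclic, use $t$-locality to get $J_v\subsetneq D$ hence $J^{-1}\supsetneq D$, and then extract an element $\gamma\in J^{-1}\setminus D$ whose principal fractional ideal contains $1$, so that $y=\gamma^{-1}\in\m$ and $J\subseteq(y)$. The only cosmetic difference is that you work with the chain of principal fractional ideals exhausting $J^{-1}$ while the paper works inside the integral ideal $aJ^{-1}$; the underlying computation is identical.
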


\begin{proof} In light of the above discussion it is sufficient to prove that the maximal ideal of a $t$-local Pre-Schreier domain is locally cyclic. Let $J=(a_1, \ldots, a_n) \subseteq \m$, set $a=a_1a_2\cdots a_n$ and $$I = \left( \frac{a}{a_1} \right) \cap \left( \frac{a}{a_2} \right) \cap \ldots \cap \left( \frac{a}{a_n} \right).$$ Since $D$ is Pre-Schreier, $I$ is locally cyclic and since $J_v \subseteq \m$, necessarily $I \neq (a)$. This implies that $a \in (x) \subseteq I$. We can write $x=a\gamma$ for some $\gamma \in J^{-1} $, but $ a \in (x) $ implies that $\gamma^{-1}= d \in \m$. It follows that $d^{-1}J \subseteq D$ and therefore $J \subseteq (d) \subseteq \m$ and hence $\m$ is locally cyclic.
\end{proof}

A local domain with locally cyclic maximal ideal does not need to be Pre-Schreier, as shown by the ring $D= \Z_{(p)} + \m$, where $\m$ is the maximal ideal of $T=\Q[X^2,X^3]_{(X^2,X^3)}$ and $p$ is a prime number.


\section{Construction of Strong Bezout Intersection domains}

In this section we introduce a polynomial-type construction that is perhaps most similar to the construction of a Rees ring.  We start with a local integral domain $R$ 
and we choose an ideal $\I$ of $R$.  We then introduce a collection $\F$, generally infinite, of indeterminates over $R$.  The elements of $\F$ can have multiplicative
relations with each other.  I.e.  if $f$ and $g$ are in $\F$ it may be that $fg$ is also in $\F$.    We then essentially treat elements of the form 
$fx$ where $x$ is in the chosen ideal $\I$ and $f \in \F$ as being variable in a polynomial-type ring.  This process will often yield interesting examples.  
Results vary widely according to what type of ideal is chosen and what multiplicative properties that set $\F$ has.

\begin{definition} \label{almostmultiplicative}
Let $R$ be an integral domain with quotient field  $K$ and consider an infinite set $\F=\lbrace f_i \rbrace_{i \in \Lambda}$ such that: 
\begin{itemize}
\item Each $f_i$ is transcendent over $K$. 
\item For every finite subset $ \h \subseteq \F  $, there exists an infinite subset $ \G \subseteq \F $ such that $hg \in \F$ for every $h \in \h$ and $g \in G$.
\end{itemize}
We refer to a set $\F$ satisfying these conditions as an \it almost multiplicatively closed transcendent set \rm over $R$ (clearly a properly multiplicatively closed set is also almost multiplicatively closed in this sense).
\end{definition}

\begin{definition} \label{RIF}
Let $R$ be a local domain with maximal ideal $\m$ and let $\I \subseteq \m$ be a proper ideal of $R$. Let $\F$ be an almost multiplicatively closed transcendent set over $R$ and set $$ \I \F = \lbrace if \mbox{ : } i \in \I, \, f \in \F \rbrace. $$ We define the local ring $$ R(\I,\F):= R[\I \F]_{\mathfrak{M}} $$ where $ \mathfrak{M} $ is the ideal generated by $\m$ and $\I \F.$
\end{definition} 

\begin{example} \label{exc}
We list some relevant examples of this construction:
\begin{enumerate}
\item An easy example is obtained taking $ \F = \lbrace x^n \rbrace_{n \geq 1} $ where $x$ is an indeterminate over $K$. This class of examples includes rings such as $$ k[a,ax,ax^2, ax^3, \ldots]_{(a,ax,ax^2, ax^3, \ldots)} $$ (here $R=k[a]_{(a)}$ and $\I=\m.$)
\item Also the $k+\m$ constructions can be obtained as particular examples of this construction. If $R$ is a local domain containing its residue field $k$, given an indeterminate $X$ over $k$, we set $\F $ to be a basis of $k(X)$ as a $k$-vector space and choose $\I=\m$. Clearly $\F$ is multiplicatively closed and $ R(\I,\F) $ is a standard $k+\m$ construction. Choosing different ideals $\I \subsetneq \m$, one may produce many variations of the same construction, often not integrally closed.
\item A different example in which $\F$ is not multiplicatively closed but only almost multiplicatively closed, is the following. Given a countable set of indeterminates $ \lbrace x_n \rbrace_{n \geq 1} $ over $K$, define the set $$ \F = \lbrace  x_{i_1}x_{i_2} \cdots x_{i_r}, \mbox{ squarefree monomial} \rbrace. $$ Clearly, for every $f \in \F$ and $m \geq 2$, $f^m \not \in \F$ but still $\F$ fulfills the condition of Definition \ref{almostmultiplicative}.
\end{enumerate}
\end{example}

When the ideal $\I$ is chosen to be $\m$-primary, as in some of the examples described above, 
the ring $ R(\I, \F) $ turns out to be a SBID. We prove this fact now.

\medskip

Consider the following notation.
We define $\pi: R(\I, \F) \to R$ to be the canonical surjection whose kernel is the ideal $\I \F \subseteq R(\I, \F)$. Each element of $R(\I,\F)$ is of the form $$ d=r_0 + r_1g_1 + \ldots + r_lg_l $$ where $r_0= \pi(d) \in R$,  $r_i \in \I$ for $i \geq 1$, and $g_i= \prod^{s_i}_{j=1} f_{ij}$ are products of elements of $\F$ for which $s_i \leq k$ where $k$ is the largest power such that  $r_i \in \I^{k}$ ($k$ may also be $ \infty $). We call an element of the form $r_ig_i$ (or $r_0$) a \it monomial \rm of $d$.

\medskip

 Call $\h (d)$ the set of all such elements $ f_{ij} \in \F $ dividing the monomials $g_i$ of $d$ in the overring $R[\F]$. Observe that $\h (d)$ is a finite subset of $ \F. $

\begin{lemma} \label{lemmino}
Let $R$ be a local domain with maximal ideal $\m$ and let $\I \subseteq \m$ be an $\m$-primary ideal of $R$. Given $r_1, r_2, \ldots, r_n \in \m$, there exists $t \in R \setminus \I $ such that $r_it \in \I$ for every $i=1, \ldots, n.$
\end{lemma}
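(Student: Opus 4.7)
The plan is to reduce everything to a pigeonhole argument on powers of the finitely generated ideal $J = (r_1, r_2, \ldots, r_n)$. Since $\I$ is $\m$-primary we have $\sqrt{\I} = \m$, so for each index $i$ there is a smallest exponent $k_i \geq 1$ such that $r_i^{k_i} \in \I$. I would then set $N = \sum_{i=1}^{n}(k_i - 1) + 1$ and show that $J^N \subseteq \I$: the ideal $J^N$ is generated by monomials $r_1^{a_1} r_2^{a_2} \cdots r_n^{a_n}$ with $\sum a_i = N$, and if we had $a_i \leq k_i - 1$ for every $i$, then $\sum a_i \leq \sum (k_i - 1) = N - 1$, a contradiction; hence some $a_j \geq k_j$, forcing $r_j^{a_j} \in \I$ and therefore the whole monomial to lie in $\I$.

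Next I would take the smallest positive integer $N_0$ for which $J^{N_0} \subseteq \I$; the previous step guarantees $N_0 \leq N$. If $N_0 = 1$, then each $r_i$ already lies in $\I$ and I can simply take $t = 1$, which belongs to $R \setminus \I$ because $\I \subseteq \m$ is a proper ideal, and satisfies $r_i \cdot 1 = r_i \in \I$ for every $i$. If instead $N_0 \geq 2$, then by minimality $J^{N_0 - 1} \not\subseteq \I$, so I can choose any $t \in J^{N_0 - 1} \setminus \I$; such a $t$ lives in $R$, is not in $\I$, and satisfies
\[
r_i t \in J \cdot J^{N_0 - 1} = J^{N_0} \subseteq \I
\]
for every $i$, which is exactly the conclusion.

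The argument is essentially self-contained and does not need any Noetherian hypothesis on $R$: only finitely many elements $r_1, \ldots, r_n$ of $\m$ enter the picture, and the $\m$-primary condition is used only in the form ``every element of $\m$ has some power in $\I$.'' The only mildly delicate point is to state the pigeonhole bookkeeping cleanly and to handle the degenerate case $N_0 = 1$ separately so as to produce a genuine element of $R \setminus \I$, but there is no serious obstacle beyond that.
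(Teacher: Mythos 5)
Your argument is correct and is essentially the paper's argument in a different packaging: the paper picks a monomial $\prod_i r_i^{h_i}$ ($0 \le h_i \le e_i$) that is maximal with respect to divisibility among those not lying in $\I$, whereas you run the same pigeonhole on exponents at the level of ideal powers, showing $J^{N}\subseteq\I$ for $J=(r_1,\dots,r_n)$ and then taking $t$ in $J^{N_0-1}\setminus\I$ for the minimal $N_0$ with $J^{N_0}\subseteq\I$. Both hinge on the same fact that a product of total degree exceeding $\sum_i(k_i-1)$ must contain some $r_j^{k_j}\in\I$, and your handling of the degenerate case $t=1$ matches the paper's parenthetical remark.
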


\begin{proof} Since $\I$ is $\m$-primary, there exist minimal integers $e_i$ such that $r_i^{e_i} \in \I$. Consider the set $L= \lbrace \prod_{i=1}^{n} r_i^{h_i} \mbox{ : } 0 \leq h_i \leq e_i \rbrace $. This set is finite, partially ordered with respect to the order relation induced by divisibility and its maximal elements are in $\I$. Hence it is possible to find some element in $L$, maximal with respect to the property of not belonging to $\I$. Take $t$ as one of such elements (possibly $t=1$ if already $r_1, r_2, \ldots, r_n \in \I$).
\end{proof}

\begin{theorem} \label{primary}
Let $R$ be a local domain with maximal ideal $\m$ and let $\I \subseteq \m$ be an $\m$-primary ideal of $R$. Then the ring $ D= R(\I,\F) $ is a Strong Bezout Intersection domain.
\end{theorem}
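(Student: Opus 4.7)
The plan is to apply Theorem~\ref{carattsbid}: it suffices to check that $D$ is local (immediate from the definition of $D$ as the localization $R[\I\F]_{\mathfrak{M}}$) and to verify the SBID property directly. For the latter, let $a_1, \dots, a_n \in D$ be pairwise incomparable with $n \geq 2$, set $a = a_1 a_2 \cdots a_n$ and $a^{(j)} = a/a_j \in D$, and put $I = \bigcap_j (a_j)$. I aim to produce an infinite family in $I$ whose classes in $I/\mathfrak{M} I$ are linearly independent over $k = D/\mathfrak{M}$, so that Nakayama's lemma forces $I$ to be non-finitely-generated.

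The family is assembled from the almost multiplicatively closed structure together with Lemma~\ref{lemmino}. Let $\h_0 = \bigcup_j \h(a_j)$, a finite subset of $\F$, and choose an infinite $\G \subseteq \F$ with $hg \in \F$ for every $h \in \h_0$ and $g \in \G$. Apply Lemma~\ref{lemmino} to the elements $\{\pi(a^{(j)})\}_j \subseteq \m$ to obtain $t \in R \setminus \I$ with $t\pi(a^{(j)}) \in \I$ for each $j$. For each $g \in \G$ I set $\beta_g := t\,a\,g$ and check: (i) $\beta_g \in D$, because the constant-in-$\F$ part of $ta$ equals $t\pi(a) \in \I$, making $t\pi(a)\,g \in \I\F \subseteq D$, while each positive-$\F$-degree monomial of $ta$ absorbs the extra factor $g$ via the grouping $fg \in \F$ (for $f \in \h_0$), preserving the required $\I^k$-coefficient condition; (ii) $\beta_g / a_j = t\,a^{(j)}\,g \in D$, by the same argument applied with $a^{(j)}$ in place of $a$, so $\beta_g \in (a_j)$ for every $j$.

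The technical heart of the proof is showing the classes $\bar\beta_g \in I/\mathfrak{M} I$ are $k$-linearly independent. I would do this by treating a fresh $g_0 \in \G$ as a formal $\F$-indeterminate and invoking the following structural fact, which follows by induction on the $\F$-degree from the grading of $R[\I\F]$ and extends to $D$ because denominators in $R[\I\F] \setminus \mathfrak{M}$ have unit constant-in-$g_0$ part: for any $d \in D$, the $g_0^k$-coefficient in the formal expansion $d = \sum_k d^{(k)} g_0^k$ lies in $\I^k \cdot D_{g_0}$, where $D_{g_0}$ is the analogous ring built from $\F \setminus \{g_0\}$. Consequently, for every $m_j \in \mathfrak{M}$ and $\mu_j \in I$, the $g_0$-linear coefficient of $m_j \mu_j$ lies in $\I \cdot I_{D_{g_0}}$, where $I_{D_{g_0}} = I \cap D_{g_0}$. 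A hypothetical relation $\sum_g c_g \beta_g \in \mathfrak{M} I$ with $c_g \in R$ then yields, upon comparing $g_0$-linear coefficients, $c_{g_0} \cdot ta \in \I \cdot I_{D_{g_0}}$ for each $g_0 \in \G$.

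The hardest step is the complementary verification that $ta \notin \I \cdot I_{D_{g_0}}$, which makes $(\I \cdot I_{D_{g_0}} : ta)$ a proper ideal of $D$ whose intersection with $R$ lies in $\m$, so that $c_{g_0} \in \m$ as desired. This uses both the $\m$-primariness of $\I$ (via the minimality of $t$ provided by Lemma~\ref{lemmino}: $t \notin \I$, yet $t\pi(a^{(j)}) \in \I$) and the pairwise incomparability of the $a_j$'s (which forbids enough common $\I$-divisibility among the $a_j$ to push $ta$ one $\I$-step deeper in $I$). I expect this incompatibility to be the main obstacle, requiring a careful bookkeeping of $\I$-filtration depth and $\F$-degree along the localization $R[\I\F] \to D$.
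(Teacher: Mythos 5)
Your construction of the witnesses coincides with the paper's: you collect the finitely many elements of $\F$ occurring in the relevant data into $\h_0$, choose an infinite $\G\subseteq\F$ with $\h_0\G\subseteq\F$, invoke Lemma \ref{lemmino} to produce $t\in R\setminus\I$ with $t\,\pi(a/a_j)\in\I$ for all $j$, and check $t a g\in I$ for every $g\in\G$ by absorbing $g$ into one $\F$-factor of each monomial. (The paper takes $\h$ from the cofactors $a/a_i$ rather than from the $a_i$, and first divides common factors out of $a=a_1\cdots a_n$ to make it a minimal generator of $I$; these are minor variations. Your opening appeal to Theorem \ref{carattsbid} is also vacuous, since you then verify the SBID condition directly.) The genuine gap is in the passage from this infinite family to non-finite-generation. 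You propose to prove $k$-linear independence of the classes $\bar\beta_g$ in $I/\mathfrak{M}I$ by expanding elements of $D$ as formal sums $\sum_k d^{(k)}g_0^k$ with $d^{(k)}\in\I^k D_{g_0}$ and comparing $g_0$-linear coefficients. That expansion is not well defined: Definition \ref{almostmultiplicative} only requires each element of $\F$ to be transcendental over $K$ individually, and $\F$ is expressly allowed to carry multiplicative relations, so a monomial of $R[\I\F]$ has no well-defined $g_0$-degree. Concretely, for $R=k[a]_{(a)}$, $\I=(a)$, $\F=\lbrace x^n\rbrace_{n\geq 1}$ and $g_0=x$, the element $a^2x^2$ equals both $(ax)^2$ (degree $2$ in $g_0$) and $a\cdot(ax^2)$ (degree $0$ in $g_0$), so the coefficient comparison on which your relation $c_{g_0}\,ta\in\I\cdot I_{D_{g_0}}$ depends collapses; and no choice of $\G$ guaranteed by the definition avoids this. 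On top of that, you explicitly leave unproved the fact your scheme ultimately rests on, namely $ta\notin\I\cdot I_{D_{g_0}}$, describing it only as the expected main obstacle. As written, the second half of the argument is therefore both ill-founded and incomplete.

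The paper closes the argument by a much lighter finiteness observation that sidesteps $I/\mathfrak{M}I$ entirely: if $I$ were generated by $d_1,\ldots,d_c$, then $W=\bigcup_i\h(d_i)$ would be a finite subset of $\F$, and one can pick $f\in\G\setminus W$, an element of $\F$ that occurs in the witness $atf\in I$ but in none of the proposed generators, yielding the contradiction. If you want to salvage your approach, the cleanest repair is to abandon the formal $g_0$-expansion and argue, as the paper does, directly from the finiteness of the set of elements of $\F$ involved in any finite generating set.
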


\begin{proof} Let $I= (a_1) \cap (a_2) \cap \ldots \cap (a_n)$ for some pairwise incomparable elements $a_1, a_2, \ldots, a_n \in D$. Let $a \in I$ be an element obtained by taking the product $ a_1a_2 \cdots a_n $ and, if needed, dividing some common factor of the $a_i$ in order to make $a$ a minimal generator of $I$ (we may assume by way of contradiction $I$ to be finitely generated). Define for $i=1, \ldots, n$, $$ b_i:=\frac{a}{a_i}. $$
Since $b_i \in D$, set $$ \h := \bigcup_{i=1}^n \h(b_i) $$ and observe that $ \h $ is a finite subset of $\F$ and by Definition \ref{almostmultiplicative}, there exists an infinite set $  \G \subseteq \F $ such that $\h \G \subseteq \F$. Also set $r_i=\pi(b_i)$ and use Lemma \ref{lemmino} in order to find $t \in R \setminus \I $ such that $r_it \in \I$ for every $i$.  We claim that $atf \in I$ for every $f \in \G$ and, since each $ft \not \in D$, this will provide an infinite set of linearly independent elements of $I$. 
Write $atf= a_ib_itf$ and prove $ b_itf \in D $. For this, following the previous notation, write $$ b_itf =(r_i + s_1g_1 + \ldots + s_lg_l)tf $$ where $s_j \in \I$ and $g_j$ are products of elements of $\h$. Clearly $r_itf \in D$ since $r_it \in \I$. For the other terms, if $g_j \not \in \h$, it is possible to write $s_jg_j = s^{\prime} g^{\prime} s g$ where $s, s^{\prime} \in \I$,  $g^{\prime}$ is a product of elements of $\h$ such that $ s^{\prime} g^{\prime} \in D $, and $g \in \h$.
Thus $s_jg_jtf= s^{\prime} g^{\prime} ts gf \in D $, since $ts \in I$ and $ gf \in \h \G \subseteq \F $.
This proves the claim. 

Now, if $I$ was finitely generated by some elements $(d_1, \ldots, d_c)$, it would follow that the set $ W= \bigcup_{i=1}^n \h(d_i) $ is finite. But instead, it is possible to take $f \in \G \setminus W $ such that $atf \in I \setminus I^2$ and this is a contradiction. It follows that $I$ is necessarily not finitely generated.
\end{proof}

\begin{remark}
The theorem above can be used to construct many examples of non-integrally closed SBID. Indeed, taking an $\m$-primary ideal $\I \subsetneq \m$ and $x \in \m \setminus \I$, if $x^e \in \I$, it is sufficient to have the existence of $f \in \F$ such that also $f^e \in \F$ in order to have $xf$ integral over $ R(\I, \F) $ but not inside it.
\end{remark}

Despite the fact that many examples are not integrally closed, one may use this construction in order to find examples of completely integrally closed SBID. A known example of completely integrally closed Schreier SBID is the Shannon extension appearing in \cite[Corollary 7.7]{HOT}. Here we want to indicate how to construct many more non-Schreier examples. In the following theorem we assume $\I$ to be equal to the maximal ideal $\m$ of $R$. 

\begin{definition} \label{full}
Let $\F$ be an almost multiplicatively closed transcendent set over $R$. We say that $\F$ is \it full \rm if, given some elements $f_1, f_2, \ldots, f_s \in \F$ such that the product $ f_1 f_2 \cdots f_s \in \F $, then also $ f_{i_1} f_{i_2} \cdots f_{i_r} \in \F $ for every choice of $i_1, i_2, \ldots, i_r \in \lbrace 1,2, \ldots, s \rbrace$.
\end{definition}

\begin{theorem} \label{cic}
Let $D=R(\m,\F)$ defined as above in this section and consider its overring $T=R[\F]$. Assume the following conditions:
\begin{enumerate}
\item[(i)] $T$ is completely integrally closed.
\item[(ii)] $\F$ is full.
\item[(iii)] For every $f \in \F$, there exists $N \in \N$ such that $f^n \not \in \F$ for $n \geq N.$ 
\end{enumerate}
Then $D$ is completely integrally closed.
\end{theorem}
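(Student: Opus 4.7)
My plan is to show that any $x\in Q=\mathrm{Frac}(D)=\mathrm{Frac}(T)$ almost integral over $D$ must lie in $D$. Suppose $c\in D\setminus\{0\}$ satisfies $cx^n\in D$ for all $n\ge 1$. Writing $c=c_0/s$ with $c_0\in T_0:=R[\mathfrak{m}\mathcal{F}]$ and $s\in T_0\setminus\mathfrak{M}$, I may assume the witness is $c_0\in T_0\subseteq T$, since $c_0x^n=s(cx^n)\in sD=D$.

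The first main step is to deduce $x\in T$ using hypothesis (i). For each $n$, the relation $c_0x^n\in D=(T_0)_{\mathfrak{M}}$ gives some $s_n\in T_0\setminus\mathfrak{M}$ with $s_nc_0x^n\in T_0\subseteq T$. The goal is to build a single nonzero $c^{\ast}\in T$ with $c^{\ast}x^n\in T$ for every $n$, which by complete integral closure of $T$ forces $x\in T$. The construction of such a uniform witness should exploit fullness (ii): since every finite collection in $\mathcal{F}$ sits inside an infinite compatible family, one can manipulate the $s_n$ so as to absorb their variation into a fixed element of $T$. I expect this to be the hard step, because $D$ is not a subring of $T$ in general (for instance $1/(1+if)\in D\setminus T$ when $i\in\mathfrak{m}$, $f\in\mathcal{F}$), so almost integrality over $D$ does not transfer to $T$ for free.

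Once $x\in T$, I would expand $x=\sum_i r_ig_i$ with $r_i\in R$ and $g_i$ products of $\mathcal{F}$-elements. For any such product $g$, let $\sigma(g)$ be the minimal integer $s$ with $g\in\mathcal{F}^s$. The structure $T_0=R+\sum_{s\ge 1}\mathfrak{m}^s\mathcal{F}^s$ shows that an element of $T$ lies in $T_0$ iff every coefficient $r_i$ belongs to $\mathfrak{m}^{\sigma(g_i)}$. Fullness (ii) is what makes $\sigma$ well-behaved under taking subproducts and under extracting leading terms, so it controls which monomials of $c_0$ and of $x^n$ can interfere in the expansion of $c_0x^n$. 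Condition (iii) is what makes $\sigma(g_j^{\,n})\to\infty$ as $n\to\infty$: if $g_j\in\mathcal{F}$ satisfies $g_j^k\notin\mathcal{F}$ for $k\ge N$, then any factorization of $g_j^{\,n}$ into $\mathcal{F}$-elements uses factors of $g_j$-degree at most $N-1$, forcing the number of factors to be at least $\lceil n/(N-1)\rceil$.

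Suppose toward contradiction that $x\notin D$, so some coefficient $r_j$ fails $r_j\in\mathfrak{m}^{\sigma(g_j)}$. In the expansion of $c_0x^n$ the diagonal monomial $g_j^{\,n}$ picks up a coefficient which, after fullness is used to isolate contributions, is essentially (constant term of $c_0$)$\cdot r_j^{\,n}$. For $c_0x^n\in D\cap T=T_0$, this coefficient must lie in $\mathfrak{m}^{\sigma(g_j^n)}$; combined with $\sigma(g_j^{\,n})\to\infty$, this forces the constant term of $c_0$ into $\bigcap_n\mathfrak{m}^n=0$. Iterating the argument on the next nonzero term of $c_0$ in an appropriate filtration exhausts all of $c_0$ and yields $c_0=0$, the desired contradiction. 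Hence $r_i\in\mathfrak{m}^{\sigma(g_i)}$ for every $i$, so $x\in T_0\subseteq D$, completing the proof.
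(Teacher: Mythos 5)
Your overall architecture coincides with the paper's: first use (i) to force an almost integral element $x$ into $T=R[\F]$, then use (ii) and (iii) to push $x$ from $T$ down into $D$. The problem is that both halves stop short of an argument. For the first half you correctly observe (more carefully than the paper, which disposes of this in one sentence) that $D=R[\m\F]_{\mathfrak{M}}$ is not contained in $T$, so the hypothesis only yields, for each $n$ separately, some $s_n\notin\mathfrak{M}$ with $s_nc_0x^n\in R[\m\F]\subseteq T$; but you then declare that a single witness in $T$ ``should'' be obtainable by exploiting fullness, without giving a construction. It is not clear how fullness, which constrains when products of elements of $\F$ again lie in $\F$, could control the denominators $s_n$: these are congruent to units of $R$ modulo $\mathfrak{M}$ and have nothing to do with the multiplicative structure of $\F$. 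As submitted this step is a declared gap, not a proof.

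The second half contains steps that are wrong as stated. First, the bound $\sigma(g_j^{\,n})\ge\lceil n/(N-1)\rceil$ presumes that every factorization of $g_j^{\,n}$ into elements of $\F$ uses only powers of $g_j$; nothing in Definition \ref{almostmultiplicative} or in fullness makes the monoid generated by $\F$ free, and already in the paper's own example $\F=\lbrace x^s : s\in\Q\cap(0,1)\rbrace$ the element $g=x^{1/2}$ has $N=2$ while $g^4=x^2=x^{2/3}\cdot x^{2/3}\cdot x^{2/3}$ is a product of three elements of $\F$, violating your claimed lower bound of four. (The fact actually needed, and the one the paper uses, is the weaker statement that $f_ig_j^{\,n}\notin\F$ for large $n$, which does follow from fullness together with (iii).) Second, the assertion that the coefficient of the ``diagonal'' monomial $g_j^{\,n}$ in $c_0x^n$ is essentially $(\text{constant term of }c_0)\cdot r_j^{\,n}$ is unjustified: that monomial receives contributions from every multinomial cross-term $\prod_k r_{i_k}g_{i_k}$ whose monomial part equals $g_j^{\,n}$ in the monoid, and from products with non-constant monomials of $c_0$; fullness does not isolate the diagonal term. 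The paper sidesteps this entirely by first reducing to the case where both the witness $d$ and $x$ are single monomials and then running a counting argument on that one monomial. Finally, your concluding appeal to $\bigcap_n\m^n=(0)$ needs to be justified from hypothesis (i), as the paper does explicitly. So the skeleton is the right one, but the two load-bearing steps---the uniform witness in $T$ and the isolation of the diagonal coefficient---are respectively missing and incorrect.
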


\begin{proof} Since $T$ is completely integrally closed, any almost integral element over $D$ has to be in $T$. Let $x \in T \setminus D $ and take $d \in D$, we want to consider the products $dx^n$ for $n \geq 1$ and prove that they are not in $D$ for any large enough $n$. The product of two elements $x,y \in T$ belongs to $D$ if and only if all the products of the monomials of $x$ and $y$ are in $D$. Hence, we may reduce to the case where $d$ and $x$ are monomials. Since $T$ is completely integrally closed, we must have $\bigcap_{i=1}^{\infty} \m = (0).$

Write $d= r f_1 \ldots f_l$ with $r \in R$, $f_i \in \F$ and $l \leq k$ where $k$ is the largest power such that $r \in \m^{k}$, and write $x= s g_1 \ldots g_m$ with $s \in R$, $g_i \in \F$ and $m > j$ where $j$ is the largest power such that $s \in \m^{j}$. Since we consider generic products of the form $ dx^n $, and $x=wg_{j+1} \ldots g_m$ with $w \in D$, we may also assume without loss of generality $s = 1$. Moreover, since $\F$ is full, we may assume all the products of the $f_i$ and all the products of $g_j$ to be not in $\F.$ 

The condition $ dx^n = r f_1 \ldots f_l (g_1 \ldots g_m)^n \in D$ implies that either $ l+nm \leq k $ or $f_ig_j^n \in \F$ for some $i,j$. Taking $n$ large enough, it is possible to exclude the first possibility and assume the second. Since $\F$ is full, this implies $g_j^n \in \F$. Assumption (iii) excludes this to be possible for every $n$. This implies that $D$ is completely integrally closed.
\end{proof}


\begin{example} 
We describe two examples of completely integrally closed SBID obtained using Theorem \ref{cic}.
\begin{enumerate}
\item  Set as base ring the DVR $R=k[a]_{(a)}$ and choose $\I= \m = aR.$ Consider, as in Example \ref{exc}, the set $ \F = \lbrace  x_{i_1}x_{i_2} \cdots x_{i_r}, \mbox{ squarefree monomial} \rbrace, $ where $ \lbrace x_n \rbrace_{n \geq 1} $ is a countable set of indeterminates over $K=k(a)$. 

In this case the ring $R[\F]=k[a, x_1, x_2, x_3, \ldots ]$ is completely integrally closed. Moreover $\F$ fulfills condition (b) and (c) of Theorem \ref{cic}. Indeed, a product of squarefree monomials is in $\F$ only if it is still squarefree and this happens if and only if the variables dividing the monomials are all distinct. Moreover, any divisor of a squarefree monomial is still squarefree and this makes $\F$ full. Condition (iii) is easily verified by taking $N=2$ for every element of $\F$. Thus, Theorem \ref{cic} implies that $R(\m,\F)$ is completely integrally closed.
\item The ring of the preceding example has infinite Krull dimension. This second example shows that it is possible to obtain completely integrally closed examples also of finite dimension. Again take $R=k[a]_{(a)}$ and $\I= \m = aR.$ Set $S = \mathbb{Q} \cap (0,1)$, let $x$ be an indeterminate over $K=k(a)$, and define $$  \F= \lbrace x^s \mbox{ : } s \in S \rbrace. $$ The ring $R[\F] = R[x^s \mbox{ : } s \in \mathbb{Q}_{+}]$ is completely integrally closed. The set $\F$ is full since, given $s_1, s_2, \ldots, s_l \in S$ such that $ s_1 +s_2+ \ldots+s_l \in S $, then clearly also any sum of the form $s_{i_1} + \ldots + s_{i_r} \in S$ if $i_1, \ldots, i_r \in \lbrace 1,2, \ldots, l \rbrace$. Finally, if $s= a/b$, then $bs \geq 1$, implying $bs \not \in S$ and hence $(x^s)^e \not \in \F$ for every $e \geq b$. Thus condition (i)-(ii)-(iii) of Theorem \ref{cic} are satisfied and $R(\m,\F)$ is completely integrally closed. One can easily observe that $R(\m,\F)$ has Krull dimension 1.
\end{enumerate}
\end{example}

\medskip

We switch now to consider the case in which the ideal $\I$ is not $\m$-primary. Clearly in this case $R(\I, \F)$ will often  not be a Strong Bezout Intersection domain, but this may still happen in some case. First we show that $R(\I, \F)$ fails to be a SBID in the case in which $R$ is not $t$-local and $\I$ is contained in some maximal $t$-ideal. 

\begin{prop} \label{tmaxc}
Let $R$ be a local domain with maximal ideal $\m$ and assume let $\I \subseteq \pp \subsetneq \m$ for some maximal $t$-ideal $\pp$ of $R$. Then
$ D= R(\I,\F) $ is not a SBID.
\end{prop}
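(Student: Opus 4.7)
The plan is to apply Theorem \ref{carattsbid}: since $D$ is local by construction, $D$ is SBID if and only if its maximal ideal $\mathfrak{M}$ is a $t$-ideal of $D$. I will exhibit a proper finitely generated Glaz-Vasconcelos ideal $J \subseteq \mathfrak{M}$, which forces $\mathfrak{M}_t = D \neq \mathfrak{M}$ and so shows $D$ is not SBID. The candidate $J$ will come by extension from $R$. Since $\pp$ is a maximal $t$-ideal of $R$ strictly inside $\m$, the ideal $\m$ is itself not a $t$-ideal of $R$, so there exists a finitely generated $L = (b_1, \ldots, b_n) \subseteq \m$ with $L^{-1} = R$. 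Because $L$ is finitely generated, $L_t = L_v = R$; if $L$ were contained in $\pp$ then $R = L_t \subseteq \pp_t = \pp$, impossible. So some $b_j =: y$ lies in $\m \setminus \pp$, and I set $J := LD \subseteq \mathfrak{M}$.

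To check $(LD)^{-1} = D$, let $\alpha \in K(\F) = \mathrm{quot}(D)$ with $\alpha b_i \in D$ for every $i$. Introducing the overring $T := R[\F]_{(\m,\F)}$, a standard argument (using that the inclusion $R \subseteq T$ is torsion-free, so that tensoring commutes with the relevant finite intersection) shows $\bigcap_i (b/b_i) T = bT$ where $b = \prod b_i$, so $LT$ remains Glaz-Vasconcelos in $T$ and hence $\alpha \in T$. For the descent back to $D$, write $\alpha y = u/v$ with $u \in R[\I\F]$ and $v \in R[\I\F]\setminus\mathfrak{M}$, so that $\alpha v = u/y \in T$ and $y \mid u$ in $T$; the critical claim is that $u/y$ already lies in $R[\I\F]$, since then $\alpha = (u/y)/v \in D$ finishes the proof. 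I would verify this claim by decomposing $u$ via the natural retraction $T \to R$ sending $\F$ to $0$ as $u = u_0 + u_+$ with $u_0 \in R$ and $u_+$ in the ideal of $T$ generated by $\I\F$; the condition $y \notin \pp \supseteq \I$ then makes $y$ a non-zerodivisor on the relevant $\I$-filtration quotients, from which one extracts $u_0/y \in R$ and reassembles $u_+/y$ inside $R[\I\F]$.

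The main obstacle is precisely this last descent step. The subring $R[\I\F]$ is typically strictly smaller than $R + \I T$, so it is not enough to show $u/y \in R + \I T$; one must instead track the monomials of $R[\I\F]$ in $\I\F$ one by one under division by $y$, using that $y$ (being outside $\pp$ while $\I \subseteq \pp$) is prime to every element of $\I$ in $R$. Carrying out this bookkeeping rigorously in the full generality of Definition \ref{almostmultiplicative} is where the proof requires the most care.
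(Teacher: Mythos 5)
Your global strategy coincides with the paper's: both arguments invoke Theorem \ref{carattsbid} in the direction ``not $t$-local $\Rightarrow$ not SBID'', take a finitely generated $L=(b_1,\dots,b_n)\subseteq\m$ with $L^{-1}=R$ (which exists because $\pp\subsetneq\m$ is a maximal $t$-ideal), note that some generator $y$ must lie outside $\pp$, and then try to show that $LD$ stays Glaz--Vasconcelos in $D$. The difference is in how that last step is executed, and your version has a genuine gap. Your descent from $T=R[\F]_{(\m,\F)}$ to $D$ uses only the two facts $u\in R[\I\F]$ and $u/y\in T$, and the ``critical claim'' that these force $u/y\in R[\I\F]$ is false. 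The reason is exactly the point you flag but then wave away: $y\notin\pp\supseteq\I$ only guarantees that division by $y$ preserves membership in the \emph{prime} $\pp$; it does not preserve membership in $\I$ or its powers, because $\I$ is an arbitrary ideal inside $\pp$, not $\pp$-primary. Concretely, take $R=k[Y,Z]_{(Y,Z)}$, $\pp=(Z)$ (a maximal $t$-ideal), $\I=(YZ)\subseteq\pp\subsetneq\m$, $y=Y$, and $f\in\F$ an element that is not a product of two or more elements of $\F$ (e.g.\ a single variable in Example \ref{exc}(3)). Then $u=YZf\in\I\F\subseteq R[\I\F]$ and $u/Y=Zf\in T$, yet $Zf\notin D$: the coefficient of the monomial $f$ in any element of $R[\I\F]$ lies in $\I=(YZ)$, and clearing a denominator taken outside $\mathfrak{M}$ only multiplies that coefficient by a unit of $R$, so $Z\notin(YZ)$ blocks membership. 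Hence ``$\alpha\in T$ and $\alpha y\in D$'' does not imply $\alpha\in D$, and your argument, which discards the hypotheses $\alpha b_i\in D$ for the generators $b_i\in\pp$, cannot be completed as written. (A smaller imprecision: torsion-freeness of $R\subseteq T$ does not let you commute the finite intersection $\bigcap_i b_i^{-1}R$ with extension to $T$; you need flatness, which here comes from the freeness of $R[\F]$ as an $R$-module that the paper implicitly assumes when it speaks of the coefficients of the monomials.)

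The paper's proof avoids this trap by keeping all the generators in play at once. Writing a putative common divisor of the $r_i$ as $\gamma=h_1/h_2$ with $h_1,h_2\in D$, it splits into two cases: if the numerator is an element $r$ of $\m$, then $h_2L\subseteq rD$ forces $aL\subseteq rR$ for some coefficient $a$ of $h_2$ not divisible by $r$, contradicting $L_v=R$ --- this is where the generators lying in $\pp$ are essential; if instead the numerator has a factor $s\in D\setminus R$ not cancelling against $h_2$, then $s$ divides the distinguished generator $r_n=y\notin\pp$ in $D$, forcing $r_n\in\I\subseteq\pp$, a contradiction --- this is where $y$ is used. If you want to keep your route through $T$, the descent step must likewise exploit $\alpha b_i\in D$ for the generators $b_i\in\pp$, not merely $\alpha y\in D$.
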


\begin{proof} 
By Theorem \ref{carattsbid}, is sufficient to prove that $D$ is not $t$-local. Since $R$ is not $t$-local, there exists $J = (r_1, r_2, \ldots, r_n) \subseteq R $ such that $J_v=R$. We may assume without loss of generality that $r_n \not \in \pp$. Hence, $ r_1, r_2, \ldots, r_n \subseteq \gamma R $ for some $ \gamma \in K $ if and only if $R \subseteq \gamma R$. Assume $ r_1, r_2, \ldots, r_n \subseteq \gamma D $ for some $ \gamma \in Q(D) \setminus K $. Write $\gamma=\frac{h_1}{h_2}$ with $h_1,h_2 \in D$ and assume by way of contradiction $\gamma^{-1} \not \in D$. 

First assume $h_1=r \in \m$. Now, not all the coefficients in $R$ of the monomials of $h_2$ can be divisible by $r$, otherwise, again dividing common factors, we would have $\gamma^{-1} \in D$. But, $h_2J \subseteq rD$, and if $a \in R$ is a coefficient of $h_2$ not divisible by $r$, we are forced to have $aJ \subseteq rR$ and this is impossible since $J_v=R$ and $(\frac{r}{a})^{-1} = \frac{a}{r} \not \in R$.

Therefore, we may assume there exists some $s \in D \setminus R$ such that $s$ divides $h_1$ but no factor of $s$ divides $h_2$. It follows that $s$ divides all the $r_i$ inside $D$ and in particular divides $r_n$. Hence $r_n=sd$ with $d \in D$ and this is impossible because it would imply $r_n \in \mathcal{I} \subseteq \pp.$
\end{proof}

We deal now with the cases in which $R$ has a divided prime ideal. We recall that a prime ideal $\pp$ of a ring $R$ is \it divided \rm if $\pp \subseteq (x)$ for every $x \not \in \pp$ (the maximal ideal of a local domain is always divided).

\begin{prop} \label{prime}
Let $R$ be a local domain with maximal ideal $\m$ and let $\pp \subseteq \m$ be a divided prime ideal of $R$. Then 
 $ D= R(\pp,\F) $ is a SBID if and only if $\frac{R}{\pp}$ is a SBID.
\end{prop}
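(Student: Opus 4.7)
My plan is to exhibit a divided prime ideal $\mathcal{Q}$ of $D$ with $D/\mathcal{Q}\cong R/\pp$, thereby realising $D$ as a pullback of the type considered in Theorem \ref{pullback} and reducing the statement to that theorem.

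I would start by defining the natural surjection $\pi\colon D \twoheadrightarrow R$ that collapses $\pp\F$ to zero; this is well-defined on the localization because every element of $R[\pp\F]\setminus\mathfrak{M}$ has unit ``constant term'' in $R$. Its kernel is $\pp\F D$, and composing with the projection $R\to R/\pp$ gives a surjection with kernel $\mathcal{Q}:=\pp+\pp\F D$, so that $\mathcal{Q}$ is prime in $D$ and $D/\mathcal{Q}\cong R/\pp$. The core step is then to show that $\mathcal{Q}$ is divided. Fix $d\in D\setminus\mathcal{Q}$ and set $r_0=\pi(d)\in R\setminus\pp$. Since $\pp$ is divided in $R$, one has $\pp\subseteq r_0 R$; moreover, each generator $pf$ of $\pp\F D$ can be written as $r_0\cdot(p/r_0)f$ with $(p/r_0)f\in\pp\F\subseteq D$, so $\pp\F D\subseteq r_0 D$ and hence $\mathcal{Q}\subseteq r_0 D$. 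Writing $d=r_0+u$ with $u\in\pp\F D$, the same computation gives $u/r_0\in\pp\F D\subseteq\mathfrak{M}$, so $d/r_0=1+u/r_0$ is a unit of the local ring $D$; therefore $dD=r_0 D\supseteq\mathcal{Q}$.

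Once $\mathcal{Q}$ is divided, standard arguments for divided primes yield $\mathcal{Q}D_\mathcal{Q}=\mathcal{Q}$ and identify $D$ with the pullback $\phi^{-1}(R/\pp)$, where $\phi\colon D_\mathcal{Q}\to D_\mathcal{Q}/\mathcal{Q}D_\mathcal{Q}=Q(R/\pp)$ is the canonical surjection and $R/\pp$ embeds naturally into its quotient field. This is exactly the setup of Theorem \ref{pullback} with $T=D_\mathcal{Q}$ and $B=R/\pp$, so that theorem immediately gives the desired equivalence. I expect the main obstacle to be the divided property of $\mathcal{Q}$ itself—in particular, controlling the ``variable'' piece $\pp\F D$ inside $r_0 D$—which would fail without the hypothesis that $\pp$ is divided in $R$.
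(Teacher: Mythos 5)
Your proposal is correct and follows essentially the same route as the paper: both identify $\mathcal{Q}=\pp+\pp\F D$ as a divided prime of $D$ with $D/\mathcal{Q}\cong R/\pp$ (using that $\pp$ divided forces $r_i/r_0\in\pp$, hence $d$ associate to $r_0$ when $\pi(d)=r_0\notin\pp$), and then conclude by realizing $D$ as the pullback of $D_{\mathcal{Q}}\to\kappa(\mathcal{Q})\hookleftarrow R/\pp$ and invoking Theorem \ref{pullback}. No substantive differences.
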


\begin{proof} 
Take the usual notation of this section. Let $d =r_0 + r_1g_1 + \ldots + r_lg_l \in D$ and assume $\pi(d)=r_0 \not \in \pp$. Since for $i \geq 1$, $r_i \in \pp$ and $\pp$ is divided, we get $\frac{r_i}{r_0} \in \pp$ and, if $r_i \in \pp^k$, also $\frac{r_i}{r_0} \in \pp^k$. Hence $\frac{r_i}{r_0} g_i \in \mathfrak{M} \subseteq D$ and $$d = r_0 \left( 1 + \frac{r_1}{r_0} g_1 + \ldots + \frac{r_i}{r_0} g_l \right),$$ implying $(d)=(r_0)$. 
Let $Q=\pp + \pp \F$ and take $q= p_0 + p_1h_1 + \ldots + p_mh_m \in Q$ and $r \in \mathfrak{M} \setminus Q$. By what said above, up to multiply a unit of $D$, we may assume $r \in R$. Replying a similar argument as before, we get that $ \frac{p_i}{r} \in \pp $ for every $i$, and hence $ \frac{q}{r} \in Q $. It follows that $Q$ is a divided prime ideal of $D$ and thus $D$ occurs in the pullback diagram \begin{center}
\begin{tikzcd}
   D \arrow[rightarrow]{r}\arrow[hookrightarrow]{d} 
  &  \frac{D}{Q} \arrow[hookrightarrow]{d}
  \\ 
   D_Q \arrow{r}
 &  \kappa(Q)
\end{tikzcd}
\end{center}
We conclude the proof by applying Theorem \ref{pullback}.
\end{proof}

Notice that the last proof does not depend on all the properties of the set $\F$ given in Definition \ref{almostmultiplicative},  hence it may be applied also in more general contexts. Also the first case described in Theorem \ref{costructionloccic} and the proof of Theorem \ref{kplusm} will not depend on the properties of the set $\F$.

We show now that, starting with rings that are already Strong Bezout Intersection such as rings with locally cyclic maximal ideal or $k+\m$ constructions, the construction $ R(\I,\F) $ will produce a Strong Bezout Intersection domain for any ideal $\I$.

\begin{theorem} \label{costructionloccic}
Let $R$ be a local domain with locally cyclic maximal ideal $\m$ and let $\I \subseteq \m$ be any ideal of $R$. Then the ring $ D= R(\I,\F) $ is a Strong Bezout Intersection domain.
\end{theorem}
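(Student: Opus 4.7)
The plan is to adapt the proof of Theorem \ref{primary} to this more general setting. Recall that the role of the $\m$-primary hypothesis there (via Lemma \ref{lemmino}) was to produce an element $t \in R \setminus \I$ with $r_i t \in \I$ for all $i$, which enabled the construction of an infinite family of elements of $I$ witnessing that $I$ is not finitely generated. Here, the $\m$-primary hypothesis is replaced by local cyclicity of $\m$.

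Given pairwise incomparable $a_1, \ldots, a_n \in D$, set $I = \bigcap_i (a_i)$ and assume for contradiction that $I$ is finitely generated, say $I = (d_1, \ldots, d_c) D$; let $W = \bigcup_k \h(d_k)$, a finite subset of $\F$. As in Theorem \ref{primary}, I would build a minimal element $a \in I$ from $a_1 \cdots a_n$ by stripping common factors, set $b_i = a/a_i \in D$, and $r_i = \pi(b_i) \in \m$. By local cyclicity of $\m$, there exists $y \in \m$ with $r_i \in (y) R$ for all $i$, and the argument splits on whether $y \in \I$.

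When $y \in \I$, each $r_i \in \I$, so the choice $t = 1 \in R \setminus \I$ fulfills the condition $r_i t \in \I$ demanded by Theorem \ref{primary}, and that proof goes through verbatim. When $y \notin \I$ (a case that may arise, for example, when $\I$ is prime and the analogous $t$ does not exist), I would instead use that for any $i_0 \in \I$ the element $i_0 b_i$ lies in $\I D$: its coefficients are one $\I$-power deeper than the $D$-membership condition requires, exactly compensating for the increase in $\F$-degree caused by multiplication. Hence $(i_0 b_i) f \in D$ for every $f \in \F$, so that $i_0 a f = a_i \, (i_0 b_i f) \in (a_i)$ for each $i$, producing an infinite family $\{i_0 a f : f \in \F\} \subseteq I$.

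The contradiction would then be extracted in the spirit of the closing argument of Theorem \ref{primary}. For $f \in \F \setminus (W \cup \h(a))$, expand the putative identity $i_0 a f = \sum_k d_k e_k$ in powers of $f$ using the transcendence of $f$ over $R[\F \setminus \{f\}]$. The $\I$-power coefficient constraints defining $D$ force each $f$-coefficient $e_{k,s}$ of $e_k$ to lie in $\I^s \cdot D'$ (where $D'$ denotes the analog of $D$ with $\F$ replaced by $\F \setminus \{f\}$). Analyzing the resulting equations in $s=0, 1$ against the minimality of $a$ and the freedom to choose $f$ (invoking the almost multiplicative closure of $\F$ to ensure an infinite supply of admissible $f$) yields the contradiction.

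The main technical difficulty is the case $y \notin \I$: the elements $i_0 a f$ all lie in $\mathfrak{M} I$ via the factorization $i_0 a f = a \cdot (i_0 f)$, so they do not serve as minimal generators and a Nakayama-type count does not directly work. The contradiction must instead be drawn from the combinatorial structure of the $\F$-variables relative to the finite support $W$ of any generating set, together with the rigidity of the $\I$-power coefficient conditions defining $D$.
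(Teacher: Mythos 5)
Your first branch (all $r_i \in \I$, take $t=1$) is fine and coincides with part of the paper's argument, but the branch $y \notin \I$ contains a genuine gap that you identify yourself without closing. The family $\lbrace i_0 a f : f \in \F\rbrace$ with $i_0 \in \I$ does lie in $I$, but since $i_0 a f = a\cdot(i_0 f)$ and $i_0 f \in \I\F \subseteq \mathfrak{M}$, the entire family sits inside $\mathfrak{M}(a) \subseteq \mathfrak{M}I \subseteq (d_1,\ldots,d_c)$; such elements can never witness non-finite generation, and the closing paragraph (``analyzing the resulting equations \ldots yields the contradiction'') is a plan rather than an argument. The point you are missing is that the multiplier $t$ must lie \emph{outside} $\I$ -- exactly the conclusion of Lemma \ref{lemmino} -- so that $tf \notin D$ and $atf$ genuinely requires the fresh variable $f$ as a generator; replacing $t$ by an element $i_0 \in \I$ destroys precisely the feature that makes the argument of Theorem \ref{primary} work.

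The paper's proof splits on a different dichotomy: $\I\m = \I$ versus $\I\m \subsetneq \I$. In the second case, if not all $r_i$ lie in $\I$, one picks $z \in \I \setminus \I\m$ and uses local cyclicity of $\m$ to find $x \in \m$ with $r_1,\ldots,r_n,z \in xR$; since some $r_i \notin \I$ one gets $x \notin \I$, and $t = z/x$ then satisfies $t \in R \setminus \I$ and $r_i t = (r_i/x)z \in \I$, after which the proof of Theorem \ref{primary} applies verbatim. In the case $\I = \I\m$ no such $t$ need exist, and an entirely different argument is used: one shows directly that the maximal ideal $\mathfrak{M}$ of $D$ is locally cyclic (given finitely many elements of $\mathfrak{M}$, local cyclicity of $\m$ combined with $\I = \I\m$ yields a common divisor $x \in \m$ with all constant terms in $xR$ and all higher coefficients in $x\I$), and then invokes Corollary \ref{corsbid}. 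Your dichotomy $y \in \I$ versus $y \notin \I$ does not line up with when either of these two techniques is available, which is why your second branch cannot be completed as written.
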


\begin{proof} We separate the proof in two different cases: \\
\bf Case 1: \rm $\I=\I\m$ \\
In this case let $d_1, d_2, \ldots, d_n \in \mathfrak{M}$ be a finite number of non-units of $D$. Write each $d_j$ as usual as $d_j =r_{0_j} + r_{1_j}g_{1_j} + \ldots + r_{l_j}g_{l_j}$ where $g_{i_j}$ are products of elements of $\F$, $r_{i_j} \in \I$ if $i \geq 1$, and $ r_{0_j} \in \m $. 

Observe that, if $z \in x\I$ and $x \in yR$, then also $z \in y\I$.
Hence, using the facts that $\m$ is locally cyclic and $\I=\I\m$, it is possible to find $x \in \m$ such that, for every $j$, $ r_{0_j} \in xR $ and, for every $j$ and for $i \geq 1$, $r_{i_j} \in x\I$. It follows that $ d_1, d_2, \ldots, d_n \in xD $ and hence the maximal ideal of $D$ is locally cyclic. Corollary \ref{corsbid} implies that $D$ is a SBID. \\
\bf Case 2: \rm $\I\m \subsetneq \I$ \\
In this case we proceed along the line of the argument used to prove Theorem \ref{primary}. Consider some pairwise incomparable elements $a_1, a_2, \ldots, a_n \in D$ and let all the elements $ a, b_1, b_2, \ldots, b_n$ and the set $\G$ be defined in the same way as in that proof. Also set $r_i=\pi(b_i)$. To conclude the proof using the same argument we only need to find $t \in R \setminus \I$ such that $r_it \in \I$ for every $i$. 

In light of Theorem \ref{primary}, we may assume the radical of $\I$ to be properly contained in $\m$.
If $r_i \in \I$ for every $i$, simply set $t=1$. Otherwise, choose $z \in \I \setminus \I \m$. Use the fact that $\m$ is locally cyclic to find $x \in \m \setminus \I$ such that $r_1, r_2, \ldots, r_n, z \in xR$ and set $t=\frac{z}{x}$.
\end{proof}

\begin{theorem} \label{kplusm}
Let $T$ be a local domain with maximal ideal $\m$ and containing its residue field $k(X)$. Let $R=k+\m$ let $\I \subseteq \m$ be any ideal of $R$. Then the ring $ D= R(\I,\F) $ is a Strong Bezout Intersection domain.
\end{theorem}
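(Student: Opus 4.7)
The plan is to adapt the proof of Theorem \ref{k+m} to the larger ring $D = R(\I,\F)$ by introducing the overring $T' := T[\I\F]_{(\m,\I\F)T[\I\F]}$. One first checks that $T'$ is a local domain with maximal ideal $\mathfrak{M}' := (\m,\I\F)T'$ whose residue field is $T/\m = k(X)$, that $k(X)$ sits inside $T'$, and that $D \subseteq T'$ with $\mathfrak{M}' \cap D = \mathfrak{M}$. Thus the inclusion $D \hookrightarrow T'$ realizes the infinite residue extension $k = D/\mathfrak{M} \hookrightarrow T'/\mathfrak{M}' = k(X)$, which is the mechanism that made Theorem \ref{k+m} work.

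Given pairwise incomparable $a_1,\ldots,a_n \in \mathfrak{M}$ with $n \geq 2$, I form $I := \bigcap_i a_i D$ and $J := \bigcap_i a_i T'$ and split into the same two cases as in Theorem \ref{k+m}. If for every $f \in J$ and every $i$ one has $f/a_i \in \mathfrak{M}$, then $f = a_i(f/a_i) \in a_i\mathfrak{M} \subseteq a_iD$ for each $i$, so $f \in I$; hence $I = J$ is a proper $T'$-ideal sitting inside $D$. The natural surjection $I/\mathfrak{M}I \twoheadrightarrow I/\mathfrak{M}'I$ then exhibits the source, a $k$-vector space, as infinite-dimensional, because the target is a nonzero $k(X)$-vector space (by Nakayama applied to the proper $T'$-ideal $I$ inside the local ring $T'$). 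This forces $I$ to require infinitely many $D$-generators. In the complementary case some $f/a_i$ is a unit of $T'$, whence $J = a_iT'$ and each $a_i/a_j$ for $j\neq i$ is a unit of $T'$ lying outside $D$ by pairwise incomparability, so its residue in $k(X)$ lies in $k(X)\setminus k$. Repeating the manipulation of Case 2 of Theorem \ref{k+m} gives $\mathfrak{M}\subseteq a_i^{-1}I$, and the infinite extension $k(X)/k$ again obstructs finite generation of $\mathfrak{M}$ as a $D$-ideal.

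The main obstacle is that, unlike in the classical $k+\m$ construction, one need not have $\mathfrak{M}' \subseteq D$ in general: when $\I$ fails to be a $T$-ideal, elements of the form $\alpha\cdot(if)$ with $\alpha\in T\setminus R$, $i\in\I$, $f\in\F$ may lie in $\mathfrak{M}'$ but outside $D$. The delicate points are therefore to confirm that the condition $f/a_i \in \mathfrak{M}$ (rather than the weaker $f/a_i\in\mathfrak{M}'$) is the correct dividing line between the two cases, and to verify that products like $b\cdot(a_i/a_j)$ in the second case really land in $\mathfrak{M}\subseteq D$ and not merely in $\mathfrak{M}'$. Bridging this gap between intersections computed in $T'$ and those computed in $D$ is the heart of the argument; once it is handled, the residue-field counting then immediately gives the failure of finite generation and therefore the SBID property.
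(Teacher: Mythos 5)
Your overall route is the same as the paper's: pass to the overring in which $T$ replaces $R$ as coefficient ring and exploit the infinite residue extension $k=D/\mathfrak{M}\hookrightarrow k(X)$. The paper, however, does not re-run the two-case analysis of Theorem \ref{k+m} at all. It forms $T(\I,\F)$ (using the assertion that $\I$ is also an ideal of $T$), lets $\mathfrak{n}=\m+\I\F$ be its maximal ideal, observes that $\mathfrak{n}\subseteq R[\I\F]$ so that $D=k+\mathfrak{n}$ is \emph{literally} a $k+\m$ construction over a local domain containing its residue field $k(X)$, and then quotes Theorem \ref{k+m} as a black box. The single identity $D=k+\mathfrak{n}$ carries the entire proof.

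That identity is exactly what your write-up lacks: you correctly name it as ``the main obstacle'' and then leave it open, but nothing else in your argument survives without it. If $\mathfrak{M}'\not\subseteq D$, your dichotomy is not exhaustive ($f/a_i$ may lie in $\mathfrak{M}'\setminus D$, which is neither ``in $\mathfrak{M}$'' nor ``a unit of $T'$''), and in your second case the products $b\cdot(a_i/a_j)$, with $b\in\mathfrak{M}$ and $a_i/a_j$ a unit of $T'$, have no reason to land in $D$. Your own counterexample-style worry is well founded: for $T=k(X)[Y]_{(Y)}$, $R=k+YT$, $\I=YR$, $\F=\lbrace x^n\rbrace_{n\geq 1}$, the element $XYx=X\cdot(Yx)$ lies in $\I\F\,T[\I\F]\subseteq\mathfrak{M}'$, and comparing degree-one coefficients in $x$ shows it is not in $R[\I\F]_{\mathfrak{M}}=D$. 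So the missing bridge is the entire mathematical content of the theorem, not a verification to be deferred; it must be supplied either by the paper's reduction (treating $\I$ as a $T$-ideal, which gives $\I\F\,T[\I\F]\subseteq R[\I\F]$ and hence $\mathfrak{n}\subseteq D$) or by a genuinely different argument. A secondary flaw: in your first case you apply Nakayama to the proper $T'$-ideal $I$ to conclude $I/\mathfrak{M}'I\neq 0$, but Nakayama needs $I$ finitely generated over $T'$, which you cannot assume (proper idempotent ideals are common in non-Noetherian local domains); the proof of Theorem \ref{k+m} instead shows directly that every proper ideal of the top ring needs infinitely many generators over $k+\m$ by multiplying a generating set by a $k$-algebra generating set of $k(X)$.
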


\begin{proof} Using the facts that $T$ and $R$ has the same quotient field and $\I$ is also an ideal of $T$, it is possible to define the local domain $T(I,\F)$. Let $\mathfrak{n}= \m+ \I\F$ be the maximal ideal of $T(I,\F)$. 
Clearly $\mathfrak{n}$ is also the maximal ideal of $D$ and $D= k + \m + \I\F$. Hence $D = k + \mathfrak{n}$ is a $k + \m$ construction and it is a SBID by Theorem \ref{k+m}. 
\end{proof}

\section*{Acknowledgements}
The first author has received financial support from Indam (Istituto Nazionale di Alta Matematica) to spend three months of Spring 2019 at Ohio State University. This article has been carried out during this period of time and the author acknowledges the support of Indam for giving him this opportunity of research. 

The first author is also supported by the NAWA Foundation grant Powroty "Applications of Lie algebras to Commutative Algebra".

 

\end{document}